\DeclareMathOperator{\re}{\mathbb{R}e}
\DeclareMathOperator{\im}{\mathbb{I}m}
\newcommand{\norm}[1]{\left\|#1\right\|}
\renewcommand{\i}{\mathrm i}
\newcommand{\INF}{{\infty}}
\newcommand{\dist}{\mbox{dist}}
\newcommand{\tta}{\theta}
\newcommand{\OM}{\Omega}
\newcommand{\sph}{{{\mathbf S}^ 1}}
\newcommand{\del}{\partial}
\newcommand{\Gam}{\varGamma}
\newcommand{\ol}{\overline}
\newcommand{\ds}{\displaystyle}
\newcommand{\dba}{\overline{\partial}}
\newcommand{\BR}{\mathbb{R}}
\newcommand{\BC}{\mathbb{C}}
\newcommand{\bu}{{\bf u}}
\newcommand{\bv}{{\bf v}}
\newcommand{\bzero}{\mathbf 0}
\newcommand{\btheta}{\boldsymbol \theta}
\newcommand{\balpha}{\boldsymbol \alpha}
\newcommand{\B}{\mathcal{B}}
\newcommand{\HT}{\mathcal{H}}
\newcommand{\lnorm}[1]{ \left\| #1 \right\|}
\newtheorem{theorem}{Theorem}[section]
\newtheorem{prop}{Proposition}[section]
\newtheorem{lemma}{Lemma}[section]
\title[A source reconstruction method]{A  source reconstruction method in two dimensional radiative transport using boundary data measured on an arc}
\begin{document}
\date{\today}
\author{Hiroshi Fujiwara}
\address{Graduate School of Informatics,  Kyoto University, Yoshida Honmachi, Sakyo-ku, Kyoto 606-8501, Japan }
\email{fujiwara@acs.i.kyoto-u.ac.jp}
\author{Kamran Sadiq}
\address{Johann Radon Institute of Computational and Applied Mathematics (RICAM), Altenbergerstrasse 69, 4040 Linz, Austria}
\email{kamran.sadiq@ricam.oeaw.ac.at}
\author{Alexandru Tamasan}
\address{Department of Mathematics, University of Central Florida, Orlando, 32816 Florida, USA}
\email{tamasan@math.ucf.edu}

\subjclass[2010]{Primary 35J56, 30E20; Secondary 45E05}
\keywords{Radiative transport, source reconstruction, scattering, $A$-analytic maps, Hilbert transform, Bukhgeim-Beltrami equation, optical tomography, optical molecular imaging}
\maketitle

\begin{abstract}
We consider an inverse source problem in the stationary radiating transport through a two dimensional absorbing and scattering medium. Of specific interest, the exiting radiation is measured on an arc. The attenuation and scattering properties of the medium are assumed known. For scattering kernels of finite Fourier content in the angular variable, we show how to quantitatively recover the part of the isotropic sources restricted to the convex hull of the measurement arc.  The approach is based on the Cauchy problem with partial data for a Beltrami-like equation associated with $A$-analytic maps in the sense of Bukhgeim, and extends authors' previous work to this specific partial data case. 
The robustness of the method is demonstrated by the results of several numerical experiments.  
\end{abstract}
\section{Introduction}

Let $\Omega$ be a strictly convex planar domain. In the steady state case, when generated solely by a source $f$ inside $\OM$,  the density  $u(z,\btheta)$ of particles at $z$ traveling in the direction $\btheta$  through an absorbing and scattering domain $\Omega$ solves the stationary transport boundary value problem 
\begin{equation}\label{TransportScatEq1}
\begin{aligned} 
&\btheta\cdot\nabla u(z,\btheta) +a(z,\btheta) u(z,\btheta) -  \int_{\sph} k(z,\btheta,\btheta')u(z,\btheta') d\btheta' = f(z,\btheta) , \quad (z,\btheta)\in \OM\times\sph,\\
&u\lvert_{\Gamma_-}=0,
\end{aligned}
\end{equation}
where $\Gam_{-} :=\{(\zeta,\btheta)\in \Gam \times\sph:\, \nu(\zeta)\cdot\btheta<0 \}$
with $\nu$ being the outer unit normal field at the boundary. The boundary condition indicates that no radiation is coming from outside the domain. Throughout, the measure on the circle is normalized to $ \int_{\sph} d\btheta =1$. 

The boundary value problem  \eqref{TransportScatEq1} is known to have a unique solution under various ``subcritical" assumptions, e.g., \cite{dautrayLions4, choulliStefanov96, choulliStefanov99, anikonov02,mokhtar}, with a general result in  \cite{stefanovUhlmann08} showing that, for an open and dense set of coefficients $a\in C^2(\ol\Omega\times\sph)$ and $k\in C^2(\ol\Omega\times\sph\times\sph)$, 
the problem  \eqref{TransportScatEq1} has a unique solution $u\in L^2(\Omega\times\sph)$ for any $f\in L^2(\Omega\times\sph)$. Some of our arguments in the reconstruction method here require solutions $u\in C^{1,\mu}(\ol\OM\times\sph)$, $\frac{1}{2}<\mu<1$. We revisit the arguments in  \cite{stefanovUhlmann08} and show that such a regularity can be achieved for sources $f\in W^{2.p}(\Omega\times\sph)$, $p>4$; see Theorem \ref{u_reg_Wp} (ii) below.

For an arc $\Lambda$ of the boundary of $\OM$, we consider the inverse problem of the determining $f$ from measurements $g$ of exiting radiation on $\Lambda$,
\begin{align}\label{data}
u|_{\Lambda_+} = g,
\end{align}
where  $\Lambda_+:=\{(z,\btheta)\in \Lambda \times\sph:\,\nu(z)\cdot\btheta>0 \}$ with $\nu$ being the outer unit normal field at the boundary. This is an inverse source problem  with partial boundary data, which applies to multiple anisotropic scattering media of non-small anisotropy far from diffusive but also single-scattering regime with applications to Optical Molecular Imaging and Optical Tomography \cite{arridge,schotland}.


Except for the results in Section 2, which concern the forward problem, in this work the source and attenuation coefficient are assumed isotropic, $f=f(z)$ and  $a=a(z)$, and that the scattering kernel $k(z,\btheta,\btheta')=k(z,\btheta\cdot\btheta')$ depends polynomially on the angle between the directions,
\begin{align}\label{particular_kM}
 k(z,\cos \tta)  = k_0 +2\sum_{n=1}^{M} k_{-n}(z) \cos (n \tta), 
\end{align} 
for some fixed integer $M \geq1$. Moreover, the functions $a,f,k$ are assumed real valued. These assumptions occur naturally in radiative transfer models in Optics, see, e.g., \cite{chandrasekhar}.

Our main result, Theorem \ref{main}, shows that  $u|_{\Lambda_+}$ determines both $f$ and $u$ in the convex hull of $\Lambda$, and provides a method of reconstruction.  Specific to two dimensional domains, our approach is based on the Cauchy problem with partial data for a Beltrami-like equation associated with $A$-analytic maps in the sense of Bukhgeim \cite{bukhgeimBook}, and extends the authors' previous work \cite{fujiwaraSadiqTamasan19}, which used measurements on the entire boundary,  to this specific partial data case. The new insight is that, similar to the non-scattering case \cite{fujiwaraSadiqTamasan21}, the trace $u|_{\Lambda}$ determines $u|_L$ provided the scattering kernel has finite Fourier content as in \eqref{particular_kM}. The role of the finite Fourier content has been independently recognized in \cite{balMonard21}. 

We are aware of only one existing work that reconstructs the source in the presence of scattering using partial boundary data in stationary radiative transport. Namely, in \cite{smirnovKlibanovNguyen19} the domain is a slab and data is available on each side on sufficiently long intervals. In contrast, our method here assumes only ``one sided" boundary data, and does not require iterative solvability of the forward problem.

As demonstrated by the numerical experiments in Section \ref{Sec:numerics}, the method is robust in the sense that it reconstructs discontinuous sources, even when embedded in media with a discontinuous absorption property, and a scattering kernel of infinite Fourier content in the angular variable. 


\section{Remarks on the existence and regularity of the forward problem}
The well posedness in $L^p(\OM\times\sph)$ of the boundary value problem \eqref{TransportScatEq1} relies on the following compactness result, proven in \cite[Lemma 2.4]{stefanovUhlmann08} for the case $p=2$ and $a$ and $k$ twice differentiable. In this section we revisit the arguments in \cite{stefanovUhlmann08} for any $1<p<\infty$, and show that they hold if the attenuation is merely \emph{once} differentiable. We work in two dimensions but this is not essential. Adopting the notation in \cite{stefanovUhlmann08}, let us consider the operators
\begin{equation} \label{Tinv_K_defn}
\begin{aligned}
[T_1^{-1} g ] (x, \btheta) &= \int_{-\INF}^{0} e^{- \int_{s}^{0} a(x +t \btheta, \btheta ) dt} g(x + s \btheta,\btheta) ds,
\quad \text{and} \\
[K g ] (x, \btheta) &= \int_{\sph} k(x,\btheta,\btheta')g(x,\btheta') d\btheta',
\end{aligned}
\end{equation}where the intervening functions are extended by 0 outside $\OM$.

Using the formal expansion
\begin{equation}\label{u_decomp}
\begin{aligned}
 u = T_1^{-1} f&+ T_1^{-1} K T_1^{-1} f +T_1^{-1} (KT_1^{-1}K)[I - T_1^{-1} K]^{-1} T_1^{-1} f,
\end{aligned}
\end{equation}
the well posed-ness in $L^p(\Omega\times\sph)$ of the boundary value problem \eqref{TransportScatEq1} reduces to the invertibility of $I- T_1^{-1}K$  in  $L^p(\OM\times \sph)$.

To further simplify notations, let $\widehat{x}=\frac{x}{|x|}$, so that for $y\neq x$ we have $y=x - |x-y|(\widehat{x-y})$.
\begin{prop}\label{KToneK_prop}
 Let $a\in C^1(\ol\Omega\times\sph)$ and $k \in C^{2}(\ol\Omega\times\sph\times\sph)$. Then the operator
\begin{align} \label{KToneK_reg}
 K T_1^{-1} K: L^p(\OM \times \sph) \longrightarrow W^{1,p}(\Omega\times\sph) \mbox{ is bounded},
\; 1<p<\infty. 
\end{align}
\end{prop}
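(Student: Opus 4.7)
The plan is to decompose $KT_1^{-1}K = K\circ T_1^{-1}\circ K$ and track the regularity gained at each stage, adapting the strategy of \cite{stefanovUhlmann08} to the weaker hypothesis $a\in C^1$ and general $1<p<\infty$. First observe that the scattering operator alone produces one full spatial derivative: for $h:=Kg$, differentiation under the integral using $k\in C^2(\ol\Omega\times\sph\times\sph)$ yields
\begin{equation*}
\nabla_y h(y,\btheta) = \int_{\sph}(\nabla_y k)(y,\btheta,\btheta')\,g(y,\btheta')\,d\btheta',
\end{equation*}
and Jensen's inequality (using the normalization $\int_{\sph}d\btheta'=1$) gives $\|\nabla_y h\|_{L^p(\Omega\times\sph)} + \|\partial_\btheta h\|_{L^p(\Omega\times\sph)} \le C\|k\|_{C^2}\|g\|_{L^p(\Omega\times\sph)}$.

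Next, writing $v:=T_1^{-1}h$ as
\begin{equation*}
v(x,\btheta) = \int_0^{\tm(x,\btheta)}E(x,-r,\btheta)\,h(x-r\btheta,\btheta)\,dr,
\end{equation*}
where $\tm(x,\btheta)$ is the backward exit time, differentiate in $x$ under the integral. The interior contribution contains $(\nabla_y h)(x-r\btheta,\btheta)$, which is already in $L^p$ by the previous step; Minkowski's inequality in $r$ together with translation invariance of Lebesgue measure yields a uniform $L^p$-in-$x$ bound, and subsequent integration in $\btheta$ gives $\nabla_x v\in L^p(\Omega\times\sph)$. Derivatives of the attenuation factor $E$ are controlled by $\|a\|_{C^1}$. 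The angular derivative $\partial_\btheta v$ is handled similarly: the potentially singular term $r\,\btheta^\perp\cdot(\nabla_y h)$ arising from $\partial_\btheta[h(x-r\btheta,\btheta)]$ is tamed by the $L^p$ bound on $\nabla_y h$ since $r\le\diam(\Omega)$.

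For the outer $K$, setting $u:=KT_1^{-1}Kg = Kv$ and pulling derivatives past the angular integral via $k\in C^2$,
\begin{equation*}
\nabla_x u(x,\btheta'') = \int_{\sph}\bigl[(\nabla_x k)(x,\btheta'',\btheta)\,v(x,\btheta) + k(x,\btheta'',\btheta)\,\nabla_x v(x,\btheta)\bigr]\,d\btheta, \qquad \partial_{\btheta''}u(x,\btheta'') = \int_{\sph}(\partial_{\btheta''}k)(x,\btheta'',\btheta)\,v(x,\btheta)\,d\btheta,
\end{equation*}
and each term lies in $L^p(\Omega\times\sph)$ by what precedes, giving the desired $W^{1,p}$ bound.

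The principal obstacle I expect is the boundary contribution from $\nabla_x\tm(x,\btheta)$ in the differentiation of $v$, which produces a trace-type term $\nabla_x\tm\cdot E\,h|_{\partial\Omega}$; since $h=Kg$ need not vanish on $\partial\Omega$ and $|\nabla_x\tm|$ blows up at grazing directions, this must be controlled via the strict convexity of $\Omega$ (off grazing rays $\tm$ is smooth) combined with the averaging by the outer $\btheta$-integration, grazing directions forming a zero-measure set in $\sph$. As a more robust alternative, the change of variables $(s,\btheta)\mapsto y = x+s\btheta$ performed before any differentiation expresses $KT_1^{-1}K$ globally as a 2D integral operator with kernel of order $|x-y|^{-1}$; differentiation then yields a kernel of order $\widehat{x-y}/|x-y|^2$, whose mean-zero angular structure is of Calderón--Zygmund type and provides $L^p$ boundedness for $1<p<\infty$ using the regularities $k\in C^2$ and $a\in C^1$.
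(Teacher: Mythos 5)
Your main line of argument breaks down at the very first step. For $h:=Kg$ with $g\in L^p(\OM\times\sph)$ you wrote
\begin{equation*}
\nabla_y h(y,\btheta)=\int_{\sph}(\nabla_y k)(y,\btheta,\btheta')\,g(y,\btheta')\,d\btheta',
\end{equation*}
but the product rule also produces the term $\int_{\sph}k(y,\btheta,\btheta')\,\nabla_y g(y,\btheta')\,d\btheta'$, and $\nabla_y g$ does not exist for a general $L^p$ input (take $k\equiv 1$: then $Kg$ is the angular average of $g$, an arbitrary $L^p(\OM)$ function of $y$). So $K$ smooths only in the outgoing angular variable, \emph{not} in space, and the claim that ``the scattering operator alone produces one full spatial derivative'' is false. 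Everything downstream of it --- differentiating $T_1^{-1}h$ under the integral using $\nabla_y h\in L^p$, and the treatment of the boundary term from $\nabla_x\tm$ --- collapses. The whole point of the proposition is that the spatial derivative is gained only by the \emph{composition} $KT_1^{-1}K$ (a velocity-averaging effect), which is why the paper never differentiates $Kg$ in $y$.

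Your closing ``more robust alternative'' is indeed the paper's actual route: the change of variables turns $KT_1^{-1}K$ into an integral operator on $\OM$ with kernel $\eta(x,|x-y|,\widehat{x-y},\btheta,\btheta')/|x-y|$, whose $x$-derivatives produce weakly singular plus Calder\'on--Zygmund pieces. But as stated it is a one-sentence sketch that omits the two ingredients the paper needs. First, differentiating the attenuation factor $e^{-\int_0^{|x-y|}a(x-t\widehat{x-y},\,\widehat{x-y})dt}$ and the composed arguments in $x$ must be organized so that only \emph{one} derivative ever falls on $a$; the paper does this by the fundamental-theorem-of-calculus split $\eta(x,r,\cdot)=\eta(x,0,\cdot)-r\,k(x,\btheta,\balpha)\,\eta_1(x,r,\cdot)$, which isolates a genuinely weakly singular part $A$ (no $a$ in its kernel at all) from a bounded-kernel part $B$ (where the factor $r$ has cancelled the singularity and $a$ appears undifferentiated inside $\eta_1$); this is exactly what lets the hypothesis be $a\in C^1$ rather than $C^2$. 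Second, the resulting Calder\'on--Zygmund operators act on $L^p(\OM\times\sph)$, not $L^p(\OM)$, and one still has to reduce to the classical theorem by taking suprema over the angular variables of the characteristic and applying H\"older in $\btheta'$ (the paper's separate lemma); your appeal to a ``mean-zero angular structure'' is not what is used and is not verified. So the approach you name at the end is the right one, but the proof is not there.
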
   
\begin{proof}
Using the definitions of $T_1^{-1}$ and $K$ above, and a change to polar coordinates, one can write
\begin{align}\label{KTinvK_defn}
 [K T_1^{-1} K g] (x, \btheta) =  \int_{\sph}\int_{\OM} \frac{ \eta \left( x, |x-y|,(\widehat{x-y}),  \btheta, \btheta' \right) }{|x-y|}  g \left( y, \btheta' \right )  dy d \btheta',
 \end{align}
 where, for $\left( x, r,\balpha, \btheta, \btheta' \right)\in\OM\times[0,\infty)\times\sph\times\sph\times\sph$,
\begin{align*}
 \eta \left( x, r,\balpha, \btheta, \btheta' \right)  = 
  e^{- \int^{r}_{0} a \left(x -t \balpha,\balpha  \right) dt}
 k \left(x, \btheta, \balpha \right) k \left(x-r\balpha, \balpha,\btheta' \right).
\end{align*}

An application of the fundamental theorem of calculus to $r\mapsto \eta \left( x, r,\balpha, \btheta, \btheta' \right)$ yields
\begin{align}\label{k_split}
\displaystyle \eta \left( x, r,\balpha, \btheta, \btheta' \right)  =  \eta \left( x, 0,\balpha, \btheta, \btheta' \right)  - r k(x,\btheta,\balpha)\eta_1 \left( x, r,\balpha, \btheta' \right),\end{align}
where, for $\balpha=(\alpha_1,\alpha_2)$, the function $\eta_1(x,r,\balpha,\btheta')$ is defined by
\begin{align*}
\int_0^1 e^{-\int_0^{r\rho} a(x-t\balpha,\balpha)dt}
\left[a(x-r\rho\balpha, \balpha) k(x-r\rho\balpha,\balpha,\btheta') +\sum_{j=1}^2\alpha_j\frac{\partial k}{\partial x_j} (x-r\rho\balpha,\btheta')\right]d\rho.
\end{align*}Note that in $\eta_1$ there are no derivatives taken on $a$, whereas there are first order derivatives on $k$.

The split of the kernel in \eqref{k_split} induces the split of the operator $KT_1^{-1}K= A-B$ with
\begin{align*}
& [A g] (x, \btheta) =  \int_{\sph}\int_{\OM} \frac{ k \left(x, \btheta, \widehat{x-y} \right)  k \left(x, \widehat{x-y},\btheta' \right)
 }{|x-y|}  g \left( y, \btheta' \right )  dy d \btheta',\;\mbox{ and }\\
 &[B g] (x, \btheta) = \int_{\sph}\int_{\OM}  k \left(x, \btheta,\widehat{x-y} \right)\eta_1 \left( x, |x-y|,\widehat{x-y}, \btheta' \right) g \left( y, \btheta' \right )  dy d \btheta'.
 \end{align*}

Using 
\begin{align*}
\nabla\frac{1}{|x-y|}=\frac{\widehat{y-x}}{|x-y|^2},\quad
\frac{\del}{\del x_j}\left(\frac{x_k-y_k}{|x-y|}\right)=\frac{1}{|x-y|}\left(\delta_{jk}- \frac{(x_j-y_j)(x_k-y_k)}{|x-y|^2}\right),
\end{align*}and the regularity $a\in C^1(\ol\OM\times\sph)$ and $k\in C^2(\ol\OM\times\sph\times\sph)$, by a straightforward calculation of the derivatives, 
one can verify that $\frac{\partial}{\partial \theta_j}B$ is an operator with bounded kernel, while $\frac{\partial}{\partial x_j}B$ and $\frac{\partial}{\partial \theta_j}A$  are operators with weakly singular kernel, all of which are bounded in $L^p(\OM\times\sph)$, e.g., \cite[Theorem  VIII.3.1]{mikhlinProessdorf_book}.
On the other hand, in addition to the terms with weakly singular kernels, the derivatives $\frac{\partial}{\partial x_j} A $ also yield operators of the Calder\'on-Zygmund type,
\begin{align}\label{CZop}
[\mathcal{C}v](x,\btheta)=\int_\sph\int_\OM\frac{\phi(x,\widehat{x-y},\btheta,\btheta')}{|x-y|^2}v(y,\btheta')dyd\btheta',
\end{align}
where the characteristic $\phi$ satisfies
\begin{align}\label{boundedChrt}
\sup_{\OM\times\sph\times\sph\times\sph}|\phi(x,\balpha,\btheta,\btheta')| <\infty.
\end{align}
The following lemma concludes the proof of the Proposition \ref{KToneK_prop}. \end{proof}
\begin{lemma}
Let $\mathcal{C}$ be the operator in \eqref{CZop} with the characteristic $\phi$ satisfying \eqref{boundedChrt}. Then
$\mathcal{C}$ is bounded in $L^p(\Omega\times\sph)$, $p>1$.
\end{lemma}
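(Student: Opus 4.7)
My plan is to treat $\mathcal{C}$ as a parametric family, indexed by $(\btheta,\btheta')\in\sph\times\sph$, of two-dimensional variable-coefficient Calder\'on--Zygmund singular integral operators on $L^p(\OM)$, and then to combine the fiberwise bounds into an $L^p(\OM\times\sph)$ estimate via Fubini and Minkowski's integral inequality.

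For each fixed pair $(\btheta,\btheta')$, I would define
\[
[T_{\btheta,\btheta'} w](x):=\mathrm{p.v.}\int_\OM\frac{\phi(x,\widehat{x-y},\btheta,\btheta')}{|x-y|^2}\,w(y)\,dy,\quad x\in\OM,
\]
so that $[\mathcal{C}v](x,\btheta)=\int_\sph[T_{\btheta,\btheta'}v(\cdot,\btheta')](x)\,d\btheta'$. After extending $w$ by zero to $\mathbb{R}^2$, this is a two-dimensional singular integral with homogeneous-of-degree-$(-2)$ kernel whose variable characteristic $\omega\mapsto\phi(x,\omega,\btheta,\btheta')$ is uniformly bounded on $\sph$ by \eqref{boundedChrt}. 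Because the kernel originates from $\partial_{x_j}[K(x,x-y)/|x-y|]$ with $K$ homogeneous of degree $0$ in its second slot, a short calculation using Euler's identity $\alpha\cdot\nabla_\alpha K(x,\alpha)=0$ together with integration by parts on $\sph$ shows that the two $|x-y|^{-2}$ contributions produced by the differentiation combine into a characteristic with vanishing spherical mean, $\int_\sph\phi(x,\omega,\btheta,\btheta')\,d\omega=0$, which is exactly what the principal value requires.

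Under the $C^1$ hypothesis on $a$ and the $C^2$ hypothesis on $k$, the characteristic $\phi$ inherits $C^1$ regularity in $x$ and in $\omega$, with bounds uniform in $(\btheta,\btheta')$. The classical Calder\'on--Zygmund theorem for variable-coefficient singular integrals with bounded, mean-zero, Dini-continuous characteristic (as available, for instance, in the Mikhlin--Pr\"ossdorf reference already cited in the preceding paragraph) then yields $\|T_{\btheta,\btheta'}w\|_{L^p(\OM)}\le C_p\|w\|_{L^p(\OM)}$ for $1<p<\infty$, with the constant $C_p$ independent of $(\btheta,\btheta')$. Assembling this parametric bound with Fubini and Minkowski's integral inequality in $\btheta'$, and closing with H\"older (using the normalization $\int_\sph d\btheta'=1$), delivers $\|\mathcal{C}v\|_{L^p(\OM\times\sph)}\le C_p\|v\|_{L^p(\OM\times\sph)}$.

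The main obstacle is the vanishing-mean verification: this cannot be seen by inspecting the two $|x-y|^{-2}$ terms of $\partial_{x_j}A$ separately, but emerges only when they are combined using the degree-zero homogeneity of $K$. The remaining pieces---uniform control in $(\btheta,\btheta')$ of the characteristic's modulus of continuity, and application of the variable-kernel Calder\'on--Zygmund theorem on the bounded domain $\OM$---are comparatively routine consequences of the $C^2$ regularity of $k$.
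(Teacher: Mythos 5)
Your argument is correct for the operator that actually arises in Proposition \ref{KToneK_prop}, but it is a genuinely different route from the paper's, and the difference is substantive. The paper collapses the angular variables at the outset: it majorizes $|\phi|$ by $\phi_\infty(x,\balpha)=\sup_{\btheta,\btheta'}|\phi(x,\balpha,\btheta,\btheta')|$ and $|v(y,\btheta')|$ by its angular integral $g(y)$, reducing everything to the single scalar kernel $\phi_\infty(x,\widehat{x-y})/|x-y|^2$ acting on $g$, to which the Calder\'on--Zygmund theorem of Mikhlin--Pr\"ossdorf is applied once. You instead keep $(\btheta,\btheta')$ as parameters, verify the cancellation condition $\int_{\sph}\phi(x,\om,\btheta,\btheta')\,d\om=0$ from the provenance of $\phi$ as the singular part of $\partial_{x_j}$ applied to a kernel homogeneous of degree $-1$, apply the variable-kernel theorem fiberwise with constants uniform in the parameters, and glue with Minkowski and H\"older. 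Your route is the one that actually supplies the hypotheses the Calder\'on--Zygmund theorem requires: once absolute values are taken, as in the paper's chain of inequalities, the characteristic $\phi_\infty$ is nonnegative, so its spherical mean cannot vanish unless it is identically zero, and the majorant $\int_\OM \phi_\infty(x,\widehat{x-y})\,|x-y|^{-2}g(y)\,dy$ is not even finite for nontrivial $g\geq 0$; the cancellation that makes the principal value converge is precisely what that reduction discards. The price of your version is that it uses information about $\phi$ --- vanishing spherical mean and Dini (here $C^1$) continuity in $\om$ uniformly in $(x,\btheta,\btheta')$ --- that is not contained in \eqref{boundedChrt}; as you implicitly recognize, boundedness of the characteristic alone does not suffice (take $\phi\equiv 1$), so what you prove is boundedness of the specific $\mathcal{C}$ produced by differentiating $A$, which is all the paper needs. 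Your Euler-identity computation of the vanishing mean is sound and is the standard fact that differentiating a homogeneous kernel of degree $-(n-1)$ yields one of degree $-n$ with mean-zero characteristic; in two dimensions it reduces to $\int_0^{2\pi}\frac{d}{d\psi}\left(K(\psi)\sin\psi\right)d\psi=0$. Only make sure the weakly singular remainder $(\partial_{x_j}K)(x,\widehat{x-y})/|x-y|$ is accounted for separately, as the paper does before invoking the lemma, and note that the regularity of $a$ plays no role in $\phi$ since $a$ does not enter the kernel of $A$.
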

\begin{proof}
For brevity let us introduce the following notations
\begin{align}\label{g}
\phi_\infty(x,\balpha) :=\sup_{\btheta,\btheta'\in\sph}|\phi(x,\balpha,\btheta,\btheta')|,\quad g(y):=\int_\sph |v(y,\btheta')| d\btheta',
\end{align}and note that, via the H\"{o}lder inequality,
$\ds\|g\|_{L^p(\OM)}\leq \|v\|_{L^p(\OM\times\sph)}.$ 
Using  the Calderon-Zygmund boundedness theorem \cite[Theorem XI.3.1]{mikhlinProessdorf_book} (the third inequality below), we estimate
\begin{align*}
\|\mathcal{C}v\|^p_{L^p(\OM\times\sph)}&\leq \int_\OM\sup_{\btheta\in\sph}|[\mathcal{C}v](x,\btheta)|^p dx\leq \int_\OM\left|\int_\sph \int_\OM\frac{\phi_\infty(x,\widehat{x-y})}{|x-y|^2}|v(y,\btheta')|dyd\btheta'\right|^p dx\\
&= \int_\OM\left| \int_\OM\frac{\phi_\infty(x,\widehat{x-y})}{|x-y|^2}g(y)dy\right|^p dx\leq C\|g\|^p_{L^p(\OM)}
\leq C\|v\|^p_{L^p(\OM\times\sph)}.
\end{align*}
\end{proof}

The following simple result is useful.
\begin{lemma}\label{12}Let $X$ be a Banach space and $A:X\to X$ be bounded. Then $I-A$ has a bounded inverse in $X$ if and only if $I-A^2$ has a bounded inverse in $X$.
\end{lemma}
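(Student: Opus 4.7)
The plan is to build the entire proof from the algebraic factorization
$$
I - A^2 \;=\; (I-A)(I+A) \;=\; (I+A)(I-A) \qquad \text{in } \mathcal{B}(X),
$$
which in particular exhibits $I-A$ and $I+A$ as commuting elements. I will invoke the standard Banach-algebra fact that if two bounded operators commute and one of them is invertible, then the inverse commutes with the other.

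For the direction $(I-A^2)$ invertible $\Rightarrow (I-A)$ invertible, I would set $R := (I-A^2)^{-1}$ and take $S := (I+A)R$ as the candidate for $(I-A)^{-1}$. Because $I+A$ commutes with $I - A^2$, it also commutes with $R$, so $S = R(I+A)$ as well. A direct computation then gives
$$
(I-A)S \;=\; (I-A)(I+A)R \;=\; (I-A^2)R \;=\; I, \qquad S(I-A) \;=\; R(I-A^2) \;=\; I,
$$
so $S$ is the bounded two-sided inverse. This direction is essentially a one-line verification once the correct candidate is identified.

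For the converse direction $(I-A)$ invertible $\Rightarrow (I-A^2)$ invertible, the natural attempt $(I-A^2)^{-1} = (I+A)^{-1}(I-A)^{-1}$ requires also inverting $I+A$, and this is the step I expect to be the main obstacle: in a general Banach space, invertibility of $I-A$ does not yield invertibility of $I+A$ (the trivial example $A = -\mathrm{Id}$ on $X = \mathbb{R}$ gives $I-A = 2\,\mathrm{Id}$ invertible but $I-A^2 = 0$ singular). My plan would therefore be to try to construct $(I-A^2)^{-1}$ as an explicit expression polynomial in $A$ and $(I-A)^{-1}$, exploiting the commutativity of these operators, and to check whether the intended application to $A = T_1^{-1}K$ in \eqref{u_decomp} silently supplies the additional structure (e.g.\ compactness of $A^2$ via Proposition~\ref{KToneK_prop} enabling a Fredholm-alternative route, or a smallness/spectral hypothesis on $A$) that closes the argument in the setting where the lemma is actually applied.
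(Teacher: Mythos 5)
Your argument for the implication ``$I-A^2$ invertible $\Rightarrow$ $I-A$ invertible'' is correct and is surely the intended one: from $I-A^2=(I-A)(I+A)=(I+A)(I-A)$, the operator $(I+A)(I-A^2)^{-1}=(I-A^2)^{-1}(I+A)$ is a two-sided bounded inverse of $I-A$. The paper states Lemma~\ref{12} without proof, so there is nothing further to compare on this half. Your diagnosis of the converse is also correct, and it exposes a defect in the statement itself rather than a gap in your proof: with $A=-I$ on $X=\mathbb{R}$ one has $I-A=2I$ invertible while $I-A^2=0$, so the ``only if'' half of the lemma is false as written. No expression polynomial in $A$ and $(I-A)^{-1}$ can repair it, since invertibility of $I-A^2$ forces invertibility of $I+A$ as well, and that does not follow from invertibility of $I-A$. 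The correct general equivalence is: $I-A^2$ is invertible if and only if both $I-A$ and $I+A$ are.

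Your suspicion about the intended application is on target, but the resolution is simpler than the routes you sketch: no hidden structure of $A=T_1^{-1}(\lambda K)$ is needed, because only the true implication is ever used. Proposition~\ref{KToneK_prop} makes $(T_1^{-1}(\lambda K))^2$ compact, the meromorphic Fredholm alternative yields invertibility of $I-(T_1^{-1}(\lambda K))^2$ for $\lambda$ outside a discrete set, and your factorization argument then gives invertibility of $I-T_1^{-1}(\lambda K)$ for those $\lambda$ --- which is exactly what Theorem~\ref{analytic_Fredholm} requires. So the paper's conclusions are unaffected; the appropriate fix is to weaken Lemma~\ref{12} to the single implication (and to drop the ``if and only if'' from the sentence that follows it).
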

As a consequence, for $\lambda\in\BC$, the operator $I- T_1^{-1}(\lambda K)$ is invertible in  $L^p(\OM\times \sph)$ if and only if  $I- (T_1^{-1}(\lambda K))^2$ is invertible in  $L^p(\OM\times \sph)$.
By Proposition \ref{KToneK_prop}, $\ds (T_1^{-1} (\lambda K))^2$  is compact for any $z\in\BC$.  Since $\ds I- (T_1^{-1} (\lambda K))^2$ is invertible for $\lambda$ in a neighborhood of $0$, an application of the meromorphic Fredholm alternative (in reflexive Banach spaces, e.g., \cite{dunfordScwartz}) yields the following result.
\begin{theorem}\label{analytic_Fredholm} 
  Let $p>1$, $a\in C^{1}(\ol\Omega\times\sph)$, and $k\in C^{2}(\ol\Omega\times\sph\times\sph)$. At least one of the following statements is true.
  
  (i) $\ds I - T_1^{-1} K$ is invertible in $ L^p(\OM\times \sph)$.
    
  (ii) there exists $\epsilon>0$ such that $\ds I - T_1^{-1} (\lambda K)$ is invertible in $ L^p(\OM\times \sph)$, for any 
  $0< |\lambda-1|<\epsilon$.
  \end{theorem}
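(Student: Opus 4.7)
The plan is to apply the meromorphic Fredholm alternative to the operator-valued analytic family $\lambda \mapsto I - (T_1^{-1}(\lambda K))^2 = I - \lambda^2 (T_1^{-1}K)^2$, using Proposition \ref{KToneK_prop} to upgrade the square to a compact operator on $L^p(\Omega\times\sph)$, and then invoking Lemma \ref{12} to transfer the dichotomy back to $I - T_1^{-1}(\lambda K)$.

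First, I would establish the compactness of $(T_1^{-1}K)^2$ on $L^p(\Omega\times\sph)$. Writing
\[
(T_1^{-1}K)^2 = T_1^{-1}\circ (KT_1^{-1}K),
\]
Proposition \ref{KToneK_prop} gives that $KT_1^{-1}K: L^p(\Omega\times\sph)\to W^{1,p}(\Omega\times\sph)$ is bounded. Since $\Omega\times\sph$ is bounded, the Rellich--Kondrachov embedding $W^{1,p}(\Omega\times\sph)\hookrightarrow L^p(\Omega\times\sph)$ is compact, so $KT_1^{-1}K$ is compact as a map $L^p\to L^p$. Composing with the bounded operator $T_1^{-1}$ on $L^p$, we conclude that $(T_1^{-1}K)^2$ is compact on $L^p(\Omega\times\sph)$. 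Consequently, for every $\lambda\in\BC$, $(T_1^{-1}(\lambda K))^2 = \lambda^2(T_1^{-1}K)^2$ is compact, and the family $\lambda\mapsto \lambda^2(T_1^{-1}K)^2$ is entire.

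Next, observe that at $\lambda = 0$ the operator $I - (T_1^{-1}(\lambda K))^2 = I$ is trivially invertible. Since $L^p(\Omega\times\sph)$ is reflexive for $1<p<\infty$, the meromorphic Fredholm alternative (as in \cite{dunfordScwartz}) applies to the analytic family $\lambda\mapsto I - \lambda^2 (T_1^{-1}K)^2$: the set $\Sigma\subset\BC$ of $\lambda$ for which $I-(T_1^{-1}(\lambda K))^2$ fails to be invertible in $L^p(\Omega\times\sph)$ is a discrete (in particular, closed and nowhere dense) subset of $\BC$.

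Finally, I would split on the location of $\lambda=1$ relative to $\Sigma$. If $1\notin\Sigma$, then $I-(T_1^{-1}K)^2$ is invertible in $L^p(\Omega\times\sph)$, whence by Lemma \ref{12} so is $I - T_1^{-1}K$, yielding (i). If $1\in\Sigma$, discreteness of $\Sigma$ supplies $\epsilon>0$ with $\Sigma\cap\{|\lambda-1|<\epsilon\}=\{1\}$, so for every $0<|\lambda-1|<\epsilon$ the operator $I-(T_1^{-1}(\lambda K))^2$ is invertible, and Lemma \ref{12} transfers this invertibility to $I - T_1^{-1}(\lambda K)$, yielding (ii). The only delicate point is the compactness step, which rests entirely on Proposition \ref{KToneK_prop} together with the boundedness of the phase-space domain; the rest is a direct packaging of the analytic Fredholm theorem.
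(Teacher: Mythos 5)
Your proposal is correct and follows essentially the same route as the paper: compactness of $(T_1^{-1}K)^2$ via Proposition \ref{KToneK_prop} (plus the compact embedding $W^{1,p}\hookrightarrow L^p$), the meromorphic Fredholm alternative applied to the analytic family $\lambda\mapsto I-\lambda^2(T_1^{-1}K)^2$, which is invertible near $\lambda=0$, and Lemma \ref{12} to transfer the dichotomy back to $I-T_1^{-1}(\lambda K)$. You merely spell out the details (the Rellich--Kondrachov step and the case analysis on whether $\lambda=1$ lies in the discrete singular set) that the paper leaves implicit.
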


If $a\in C^2(\ol\OM\times\sph)$, then the regularity of the solution $u$ of \eqref{TransportScatEq1} increases with the regularity of $f$ as follows.
\begin{theorem} \label{u_reg_Wp}
 Consider the boundary value problem \eqref{TransportScatEq1} with $a\in C^2(\ol\OM\times\sph)$. For $p>1$, let $k\in C^{2}(\ol\Omega\times\sph\times\sph)$ be such that $\ds I - T_1^{-1}K$ is invertible in $L^p(\OM\times\sph)$, and let $u\in L^p(\OM\times\sph)$ in \eqref{u_decomp} be the solution  of \eqref{TransportScatEq1}.
 
 (i) If  $f \in W^{1,p}(\OM \times \sph)$, then $u \in W^{1,p}(\OM \times \sph)$. 
 
 (ii) If  $f \in W^{2,p}(\OM \times \sph)$, then $u \in W^{2,p}(\OM \times \sph)$. 
  
\end{theorem}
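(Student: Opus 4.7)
The plan is to exploit the representation \eqref{u_decomp}: noting that $[I-T_1^{-1}K]^{-1}T_1^{-1}f = u$ itself (both solve the integrated form $v = T_1^{-1}f + T_1^{-1}Kv$ of \eqref{TransportScatEq1}), \eqref{u_decomp} becomes the self-referential identity
\begin{equation*}
u \;=\; T_1^{-1}f \;+\; T_1^{-1}KT_1^{-1}f \;+\; T_1^{-1}(KT_1^{-1}K)u,
\end{equation*}
which is appealing because all $u$-dependence on the right is funnelled through the block $KT_1^{-1}K$, whose one-order smoothing is the content of Proposition~\ref{KToneK_prop}. Two preliminary boundedness facts would be established first: (P1) $T_1^{-1}\colon W^{\ell,p}(\OM\times\sph)\to W^{\ell,p}(\OM\times\sph)$ is bounded for $\ell=0,1,2$ when $a\in C^{\ell}$, proved by differentiating the transport equation $\btheta\cdot\nabla v + av = g$ satisfied by $v=T_1^{-1}g$ (the derivatives $\partial^{\alpha} v$ solve analogous transport equations with $L^p$ right-hand sides built from $\partial^{\alpha} g$ and from lower-order derivatives of $v$ multiplied by derivatives of $a$) and iterating the $L^p$-bound of $T_1^{-1}$; and (P2) $K\colon W^{\ell,p}\to W^{\ell,p}$ is bounded under the analogous hypothesis on the scattering kernel, immediate from differentiation under the integral sign in $\btheta'$.

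Part (i) then falls out. With $f\in W^{1,p}$ the first two terms lie in $W^{1,p}$ by (P1)--(P2); for the third, Proposition~\ref{KToneK_prop} applied to the a priori solution $u\in L^p$ gives $(KT_1^{-1}K)u\in W^{1,p}$, after which (P1) yields $T_1^{-1}(KT_1^{-1}K)u\in W^{1,p}$.

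Part (ii) is the more substantive half because the smoothing of $KT_1^{-1}K$ is only of first order. By (i) we already have $u\in W^{1,p}$, and with $a,k\in C^2$ the first two terms lie in $W^{2,p}$ by (P1)--(P2) with $\ell=2$. The last term reduces to the strengthening
\begin{equation*}
KT_1^{-1}K\colon W^{1,p}(\OM\times\sph)\longrightarrow W^{2,p}(\OM\times\sph) \ \text{bounded},
\end{equation*}
which I would prove by pushing the Taylor expansion used in the proof of Proposition~\ref{KToneK_prop} one further step, writing $\eta(x,r,\balpha,\btheta,\btheta')=\eta(x,0,\cdot)-r\eta_1(x,0,\cdot)+r^2\eta_2(x,r,\cdot)$ with $\eta_2$ controlled by \emph{second}-order derivatives of $a$ and $k$---this is where the upgraded $C^2$-hypothesis is genuinely consumed. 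Splitting $KT_1^{-1}K$ correspondingly into a Calder\'on--Zygmund-type $1/|x-y|$ piece, a bounded-kernel piece, and a smoother piece with kernel $|x-y|\,\eta_2$, one $x$-derivative is absorbed exactly as in Proposition~\ref{KToneK_prop}. The second derivative is extracted via the identities $\partial_{x_j}|x-y|=-\partial_{y_j}|x-y|$ and $\partial_{x_j}\widehat{x-y}=-\partial_{y_j}\widehat{x-y}$ to write $\partial_{x_j}M = \partial_{x_j}^{\mathrm{expl}}M - \partial_{y_j}M$ (with $\partial_{x_j}^{\mathrm{expl}}$ acting only on the explicit $x$-arguments of the kernel), then integrating by parts in $y$ to transfer $\partial_{y_j}$ onto $v\in W^{1,p}$, yielding $\partial_{y_j}v\in L^p$ which is fed back into Proposition~\ref{KToneK_prop}; the $\partial_{x_j}^{\mathrm{expl}}M$ piece inherits the singular structure of $M$ itself. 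The main obstacle is controlling the boundary term arising from this integration by parts (the kernel is singular exactly where $x$ and $y$ both approach $\partial\OM$) together with the bookkeeping of the new Taylor remainder, both of which must be carried out carefully enough for the $L^p$-bounds of \cite{mikhlinProessdorf_book} to still apply.
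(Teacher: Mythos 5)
Your part (i) is essentially the paper's argument (same decomposition \eqref{u_decomp}, same use of Proposition \ref{KToneK_prop} on the third term), but your part (ii) hinges on a strengthened smoothing estimate, $KT_1^{-1}K\colon W^{1,p}(\OM\times\sph)\to W^{2,p}(\OM\times\sph)$, and this estimate is false --- the boundary term you flag as ``the main obstacle'' is not a technicality to be handled carefully but a genuine obstruction. The issue is already visible in the model case $k\equiv const$, where the leading piece of $KT_1^{-1}K$ reduces to $v\mapsto \int_\OM |x-y|^{-1}\big(\int_\sph v(y,\btheta')d\btheta'\big)dy$. Take $v\equiv 1\in W^{1,p}(\OM)$ on the unit disk: then $\partial_j\int_\OM|x-y|^{-1}dy=-\int_{\partial\OM}\nu_j(y)|x-y|^{-1}d\sigma(y)$, and a second derivative gives $\int_{\partial\OM}(x-y)_k\nu_j(y)|x-y|^{-3}d\sigma(y)$, which for $x$ at distance $d$ from $\partial\OM$ behaves like $c/d$ (the integrand is $\approx -d\,(d^2+t^2)^{-3/2}$ in a boundary chart). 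Since $\dist(\cdot,\partial\OM)^{-1}\notin L^p(\OM)$ for $p\ge 1$, the second derivatives are not in $L^p$. Equivalently, in your integration-by-parts scheme $\partial_{x_j}=\partial_{x_j}^{\mathrm{expl}}-\partial_{y_j}$, the term $\partial_{y_j}$ hits the extension by zero of $v$, producing a surface measure $v\,\nu_j\,\delta_{\partial\OM}$ whose Calder\'on--Zygmund-type potential is exactly this unbounded boundary integral; it vanishes only if $v$ has zero trace, which $u$ does not (it vanishes on $\Gamma_-$ but not on $\Gamma_+$). So the operator-norm route to (ii) cannot work as stated.

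The paper's proof of (ii) circumvents this entirely and only ever uses the \emph{first}-order smoothing of Proposition \ref{KToneK_prop}. It rewrites \eqref{TransportScatEq1} as the integral equation $u+T_0^{-1}(au)-T_0^{-1}Ku=T_0^{-1}f$ and \emph{differentiates the equation}, not the solution operator: $u_{x_j}$ satisfies an equation of exactly the same type, $u_{x_j}+T_0^{-1}(au_{x_j})-T_0^{-1}Ku_{x_j}=T_0^{-1}\big(f_{x_j}-a_{x_j}u+K_ju\big)$, whose source $f_{x_j}-a_{x_j}u+K_ju$ lies in $W^{1,p}$ because $a\in C^2$, $k\in C^2$, $f\in W^{2,p}$ and (by part (i)) $u\in W^{1,p}$; applying part (i) to \emph{this} problem gives $u_{x_j}\in W^{1,p}$. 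The angular derivatives are then handled the same way via an equation for $u_{\theta_j}$ (with $K=0$ and extra commutator terms involving $\widetilde{T}_0^{-1}$ and the already-controlled $u_{x_j}$). The moral difference is that the paper bootstraps through the equation satisfied by each first derivative of the particular solution $u$, rather than asking the solution operator to gain two derivatives on arbitrary $W^{1,p}$ inputs --- which, as the computation above shows, it does not. To repair your argument you would have to adopt this bootstrap (or otherwise exploit that $u_{x_j}$ itself solves a transport problem), since no amount of care with the Taylor remainder $\eta_2$ will remove the boundary contribution.
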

\begin{proof}

(i) Recall the representation \eqref{u_decomp} of the solution of \eqref{TransportScatEq1},
\begin{align*}
 u = T_1^{-1} f&+ T_1^{-1} K T_1^{-1} f +T_1^{-1} [KT_1^{-1}K ](I - T_1^{-1} K)^{-1} T_1^{-1} f.
\end{align*}
It is easy to see that  $T_1^{-1}$ and $K$  preserve the space $W^{1,p}(\Omega\times\sph)$, so that the first two terms  belong to $W^{1,p}(\Omega\times\sph)$.
Now, by Proposition \ref{KToneK_prop}, the third term is also in $W^{1,p}(\OM\times\sph)$. 

(ii)  For brevity we introduce the operators
\begin{equation}\label{T0tilde_Kj_defn}
\begin{aligned}
&T_0^{-1}u(x,\btheta)=\int_{-\infty}^0 u(x+t\btheta,\btheta)dt, \quad K_ju(x,\btheta)=\int_\sph \frac{\del k}{\del x_j}(x,\btheta,\btheta')u(x,\btheta')d\btheta',  \\
&\widetilde{T}_0^{-1}u(x,\btheta)=\int_{-\infty}^0 u(x+t\btheta,\btheta)tdt, \quad 
\widehat{K}_ju(x,\btheta)=\int_\sph \frac{\del k}{\del \theta_j}(x,\btheta,\btheta')u(x,\btheta')d\btheta',\;j=1,2.
\end{aligned}
\end{equation}It is easy to see that $T_0^{-1}, \widetilde{T}_0^{-1},K_j$ and $\widehat{K}_j$ preserve $W^{1,p}(\OM\times\sph)$. 

By evaluating \eqref{TransportScatEq1} at $x + t \btheta$ and integrating in $t$ from $-\INF$ to $0$, the problem \eqref{TransportScatEq1} with zero incoming fluxes is equivalent to the integral equation.

\begin{align}\label{TransportScatEq1_Equiv}
u+ T_0^{-1} (au)- T_0^{-1}Ku=T_0^{-1}f.
\end{align}

For $f\in W^{1,p}(\OM\times\sph)$, according to part (i),  $u_{x_j}\in L^p(\OM\times\sph)$. In particular $u_{x_j}$ solves the integral equation
\begin{align}\label{TransportScatEq1_Equiv_delxU}
u_{x_j} + T_0^{-1} (au_{x_j})- T_0^{-1}Ku_{x_j}=T_0^{-1}f_{x_j} - T_0^{-1} (a_{x_j}u)+ T_0^{-1}K_ju
\end{align}Moreover, since $a\in C^2(\ol\OM\times\sph)$, $k\in C^{2}(\ol\Omega\times\sph\times\sph)$, and $f \in W^{2,p}(\OM \times \sph)$, the right-hand-side of  \eqref{TransportScatEq1_Equiv_delxU} lies in $W^{1,p}(\OM \times \sph)$. By applying part (i) above, we get that the unique solution to \eqref{TransportScatEq1_Equiv_delxU}
\begin{align}\label{xfirst}
u_{x_j}  \in W^{1,p}(\OM \times \sph), \; j=1,2.
\end{align}

For $f\in W^{1,p}(\OM\times\sph)$, also according to part (i),  $u_{\theta_j}\in L^p(\OM\times\sph)$. In particular $u_{\theta_j}$ is the unique solution of the integral equation
\begin{align}\label{TransportScatEq1_Equiv_DthetaU}
u_{\theta_j}+ T_0^{-1}(a u_{\theta_j}) = T_0^{-1} f_{\theta_j} -\widetilde{T}_0^{-1} (au_{x_j})-\widetilde{T}_0^{-1} (a_{x_j}u) - T_0^{-1}(a_{\theta_j}u) +\widetilde{T}_0^{-1}K_ju- T_0^{-1}\widehat{K}_j u,
\end{align} which is of the type \eqref{TransportScatEq1_Equiv} with $K=0$. Moreover, since $f \in W^{2,p}(\OM \times \sph)$, and, according to  \eqref{xfirst}, $u_{x_j}  \in W^{1,p}(\OM \times \sph), \; j=1,2$, the right-hand-side of  \eqref{TransportScatEq1_Equiv_DthetaU} lies in $W^{1,p}(\OM \times \sph)$. Again, by applying part (i), we get 
$$u_{\theta_j} \in W^{1,p}(\OM \times \sph), \; j=1,2.$$ Thus, $u \in W^{2,p}(\OM \times \sph)$.

\end{proof}

\section{Ingredients from  $A$-analytic theory}
For $0<\mu<1$, and  $\Gam$ (some part of) the boundary $\del \OM$ we consider the Banach spaces:
\begin{equation*}
 \begin{aligned} 
 l^{1,1}_{\INF}(\Gam) &:= \left \{ \bv\; : \lnorm{\bv}_{l^{1,1}_{\INF}(\Gam)}:= \sup_{\xi \in \Gam}\sum_{j=1}^{\INF}  j \lvert v_{-j}(\xi) \rvert < \INF \right \},\\
 C^{\mu}(\Gam; l_1) &:= \left \{ \bv:
\sup_{\xi\in \Gam} \lVert \bv(\xi)\rVert_{\ds l_{1}} + \underset{{\substack{
            \xi,\eta \in \Gam \\
            \xi\neq \eta } }}{\sup}
 \frac{\lVert \bv(\xi) - \bv(\eta)\rVert_{\ds l_{1}}}{|\xi - \eta|^{ \mu}} < \INF \right \}.
 \end{aligned}
 \end{equation*}
We similarly consider $l^{1,1}_{\INF}(\ol \OM)$, $C^{\mu}(\overline\OM ; l_1)$, and $C^{\mu}(\ol\OM ; l_{\INF})$.

A sequence valued map $\OM \ni z\mapsto  \bv(z): = \langle v_{0}(z), v_{-1}(z),v_{-2}(z),... \rangle$ in $C(\ol\OM;l_\INF)\cap C^1(\OM;l_\INF)$
is called {\em $\mathcal{L}^2$-analytic} (in the sense of Bukhgeim), if
\begin{equation}\label{Aanalytic}
\ol{\del} \bv (z) + \mathcal{L}^2 \del \bv (z) = 0,\quad z\in\OM,
\end{equation} where  $\mathcal{L}$ is the left shift operator,
$\displaystyle \mathcal{L} \langle v_{0}, v_{-1}, v_{-2}, \cdots  \rangle =  \langle v_{-1}, v_{-2},  \cdots \rangle,$
and $\mathcal{L}^{2}=\mathcal{L}\circ \mathcal{L}$. 
Note that we use the sequences of non-positive indexes to conform with the original notation in Bukhgeim's work \cite{bukhgeimBook}.

Analogous to the analytic maps,  the $\mathcal{L}^2$-analytic maps are determined by their boundary values via a Cauchy-like integral formula \cite{bukhgeimBook}. 
Following \cite{finch}, the Bukhgeim-Cauchy operator $\B$ acting on $\bv=\langle v_{0}, v_{-1}, v_{-2},...\rangle $
is defined component-wise for $n\leq 0$ by
\begin{align} \label{BukhgeimCauchyFormula}
(\B \bv)_{n}(z) &:= \frac{1}{2\pi \i} \int_{\Gam}
\frac{ v_{n}(\zeta)}{\zeta-z}d\zeta  + \frac{1}{2\pi \i}\int_{\Gam} \left \{ \frac{d\zeta}{\zeta-z}-\frac{d \ol{\zeta}}{\ol{\zeta-z}} \right \} \sum_{j=1}^{\infty}  
 v_{n-2j}(\zeta)
\left( \frac{\ol{\zeta-z}}{\zeta-z} \right) ^{j},\; z\in\OM.
\end{align}
As shown in \cite[Theorem 2.2]{sadiqTamasan02}, if $\bv=\langle v_{0}, v_{-1}, v_{-2},...\rangle  \in l^{1,1}_{\INF}(\Gam)\cap C^\mu(\Gam;l_1)$, then $\B \bv\in C^{1,\mu}(\OM;l_\infty)\cap C(\ol \OM;l_\infty)$  is $\mathcal{L}^2$-analytic in $\OM$. If  $\bv\in C^{1,\mu}(\OM;l_\infty)\cap C(\ol \OM;l_\infty)$ is $\mathcal{L}^2$-analytic in $\ol\OM$, then $\bv(z)=\B\bv(z)$, for $z\in\ol\OM$.

Also similar to the analytic maps, the traces on the boundary of $\mathcal{L}^2$-analytic maps satisfy some constraints, which can be expressed in terms of a corresponding Hilbert transform introduced in  \cite{sadiqTamasan01}. More precisely, the Bukhgeim-Hilbert transform $\HT$ is defined 
component-wise for $n \leq 0$ by
 \begin{align}\label{hilbertT}
(\HT\bv)_{n}(\xi)&=\frac{1}{\pi} \int_{\Gam } \frac{v_{n}(\zeta)}{\zeta - \xi} d\zeta 
+\frac{1}{\pi} \int_{\Gam } \left \{ \frac{d\zeta}{\zeta-\xi}-\frac{d \ol{\zeta}}{\ol{\zeta-\xi}} \right \} \sum_{j=1}^{\infty}  
v_{n-2j}(\zeta)
\left( \frac{\ol{\zeta-\xi}}{\zeta-\xi} \right) ^{j},\; \xi\in\Gam,
\end{align}
and we refer to \cite{sadiqTamasan01} for its mapping properties. For the proof of the theorem below we refer to  \cite[Theorem 3.2]{sadiqTamasan01}.

\begin{theorem}\label{BukhgeimCauchyThm}
Let $\bv = \langle v_{0}, v_{-1}, v_{-2},...\rangle\in l^{1,1}_{\INF}(\Gam)\cap C^\mu(\Gam;l_1)$ be defined on the boundary $\Gam$.
Then $\bv$ is the boundary value of an $\mathcal{L}^2$-analytic function if and only if
\begin{align} \label{NecSufEq}
   (I+\i\HT) \bv = {\bf {0}}.
\end{align} 
\end{theorem}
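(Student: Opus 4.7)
The plan is to derive the theorem from a Sokhotski--Plemelj-type jump formula for the Bukhgeim--Cauchy operator $\B$, mirroring the classical link between analytic functions and the ordinary Cauchy integral. A direct comparison of \eqref{BukhgeimCauchyFormula} with \eqref{hilbertT} shows that $\B$ and $\HT$ share the same componentwise kernel structure up to the prefactors $\frac{1}{2\pi\i}$ versus $\frac{1}{\pi}$, so that $\B = \frac{1}{2\i}\HT$ at the level of their singular integrals. The central identity I would establish is
\begin{equation*}
\lim_{\substack{z\to\xi \\ z\in\OM}} (\B\bv)_n(z) \;=\; \frac{1}{2}v_n(\xi) + \frac{1}{2\i}(\HT\bv)_n(\xi), \qquad \xi\in\Gam,\; n\le 0,
\end{equation*}
for $\bv\in l^{1,1}_\INF(\Gam)\cap C^\mu(\Gam;l_1)$.

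Granted this jump formula, the necessity direction is short: if $\bv$ is the boundary trace of an $\mathcal{L}^2$-analytic map $\bu$, then the Bukhgeim--Cauchy reproducing property recalled after \eqref{BukhgeimCauchyFormula} gives $\bu = \B\bv$ on $\ol\OM$, and letting $z\to\xi$ yields $v_n(\xi) = \tfrac{1}{2}v_n(\xi) + \tfrac{1}{2\i}(\HT\bv)_n(\xi)$, equivalently $\HT\bv = \i\bv$, which is exactly $(I+\i\HT)\bv = {\bf 0}$.

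For sufficiency I would assume $(I+\i\HT)\bv = {\bf 0}$ and define $\bu := \B\bv$ in $\OM$. By \cite[Theorem 2.2]{sadiqTamasan02}, $\bu$ is $\mathcal{L}^2$-analytic in $\OM$ and extends continuously to $\ol\OM$. Applying the jump formula and using $\HT\bv = \i\bv$ on $\Gam$, the boundary trace computes to
\begin{equation*}
\lim_{z\to\xi} u_n(z) = \tfrac{1}{2}v_n(\xi) + \tfrac{1}{2\i}\,\i\, v_n(\xi) = v_n(\xi),
\end{equation*}
so $\bv$ is the trace on $\Gam$ of the $\mathcal{L}^2$-analytic map $\bu$, as required.

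The main obstacle will be justifying the jump formula itself. The first summand in \eqref{BukhgeimCauchyFormula} is a standard Cauchy integral, for which the classical Sokhotski--Plemelj relations under $\bv\in C^\mu(\Gam;l_1)$ produce exactly $\frac{1}{2}v_n(\xi)$ plus $\frac{1}{2\i}$ times the corresponding principal value. The delicate piece is the series $\sum_{j\ge 1} v_{n-2j}(\zeta)\bigl(\overline{\zeta-z}/(\zeta-z)\bigr)^j$: the factor $\overline{\zeta-z}/(\zeta-z)$ is unimodular on $\Gam$, so each fixed $j$ still has a $|\zeta-z|^{-1}$ order singularity and admits a Plemelj decomposition; one must verify that the jump-mass contributions of the $d\zeta$ and $d\bar\zeta$ pieces cancel, leaving only a principal-value term to be identified with the corresponding part of $\HT\bv$. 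One must then exchange the limit $z\to\xi$ with the infinite sum, which is precisely where the weighted norm $\sum_j j\,|v_{-j}|$ defining $l^{1,1}_\INF(\Gam)$ enters: it supplies the summability needed for uniform convergence up to the boundary, along the lines of the argument in \cite[Theorem 3.2]{sadiqTamasan01}.
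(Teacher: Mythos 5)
Your proposal is correct and follows essentially the same route as the proof the paper relies on (it defers to \cite[Theorem 3.2]{sadiqTamasan01}): a Sokhotski--Plemelj-type jump relation $\lim_{\OM\ni z\to\xi}(\B\bv)_n(z)=\tfrac12 v_n(\xi)+\tfrac{1}{2\i}(\HT\bv)_n(\xi)$, combined with the reproducing property of $\B$ for necessity and $\bu:=\B\bv$ for sufficiency. You have also correctly located the only delicate point, namely the boundary continuity of the series term with the unimodular kernel $\bigl(\ol{\zeta-z}/(\zeta-z)\bigr)^j$, where the $l^{1,1}_{\INF}(\Gam)\cap C^\mu(\Gam;l_1)$ hypotheses are used exactly as in the cited reference.
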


In addition to $\mathcal{L}^2$-analytic maps, another ingredient consists in the one-to-one relation between solutions
$ \bu: = \langle u_{0}, u_{-1},u_{-2},... \rangle $
to
\begin{align}\label{beltrami}
\dba u_{-n}(z) +\del u_{-n-2}(z) + a(z)u_{-n-1}(z) &=0,\quad z\in \OM, \; n\geq 0.
\end{align}
and the $\mathcal{L}^2$-analytic map $\bv$ satisfying
\begin{align}\label{Analytic}
 \dba v_{-n}(z) +\del v_{-n-2}(z) &=0,\quad z\in \OM, \; n\geq 0;
\end{align}see \cite[Lemma 4.2]{sadiqScherzerTamasan} for details. The relation can be expressed via the convolutions
\begin{equation}\label{conv_uandv}
\begin{aligned}
 v_{-n}(z) &= \sum_{j=0}^{\INF} \alpha_j(z) u_{-n-j}(z), \quad z\in \OM, \; n \geq 0, \\
 u_{-n}(z) &= \sum_{j=0}^{\INF} \beta_j(z) v_{-n-j}(z), \quad z\in \OM, \; n \geq 0,
\end{aligned} 
\end{equation} where $\alpha_j$'s and  $\beta_j$'s are the Fourier modes of  $e^{\mp h}$,
\begin{align}\label{ehEq}
  e^{- h(z,\btheta)} := \sum_{m=0}^{\INF} \alpha_{m}(z) e^{\i m \tta}, \quad e^{h(z,\btheta)} := \sum_{m=0}^{\INF} \beta_{m}(z) e^{\i m \tta}, \quad (z, \tta) \in \ol\OM \times \sph, 
\end{align}with $h$ defined by
\begin{align}\label{hDefn}
h(z,\btheta) := Da(z,\btheta) -\frac{1}{2} \left( I - \i H \right) Ra(z\cdot \btheta^{\perp}, \btheta^{\perp}).
\end{align} In the above formula, $\btheta^\perp$ is  orthogonal  to $\btheta$, 
$Da(z,\btheta) =\ds \int_{0}^{\INF} a(z+t\btheta)dt$ is the divergent beam transform of the attenuation $a$, 
$Ra(s, \btheta^{\perp}) = \ds \int_{-\INF}^{\INF} a\left( s \btheta^{\perp} +t \btheta \right)dt$ is the Radon transform of
the attenuation $a$, and $H$ is the (infinite) Hilbert transform 
\begin{align}\label{classical_Hilbert}
H f(x)=\frac{1}{\pi}\int_{-\INF}^{\INF}\frac{f(s)}{x-s}ds
\end{align}
taken in the first variable and evaluated at $s = z \cdotp \btheta^{\perp}$. 
The function $h$ appeared first in \cite{naterrerBook} and enjoys the crucial property of having vanishing negative Fourier modes. 
We refer to \cite[Lemma 4.1]{sadiqScherzerTamasan} for the properties of $h$ used in here.

\section{Reconstruction  in $\OM^+$ of a sufficiently smooth isotropic source $f$}\label{partialInv}

The method of reconstruction proposed here considers the operator $ [I - \i H_t]$, where $H_t$ is the finite  Hilbert transform 
\begin{align}\label{finite_Hilbert}
H_t g(x)=\frac{1}{\pi}\int_{-l}^{l}\frac{g(s)}{x-s}ds, \quad x \in (-l,l),
\end{align} with the integral understood in the sense of principal value. 
It is well-known (\cite{tricomi57}) that $\i H_t$ is a bounded operator in $L^2(-l,l)$ with spectrum $[-1,1]$, see \cite{koppelmanPincus58,okadaElliott91}. However, $1$ is not in the point spectrum, see \cite{widom60} for a proof based on a Riemann-Hilbert problem. The arguments below (from \cite[Proposition 2.1]{fujiwaraSadiqTamasan21}) use the unitary property in $L^2(\BR)$ of the (infinite) Hilbert transform $H$ in \eqref{classical_Hilbert}. We repeat them here for reader's convenience.
\begin{prop}\label{prop_finiteHilbert}In $L^2(-l,l)$,
$\ds Ker [I-\i H_t] = \{ 0\}$.
\end{prop}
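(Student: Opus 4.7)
The plan is to extend any putative kernel element by zero, leverage the fact that the full Hilbert transform $H$ on $L^2(\mathbb{R})$ is an isometry, and then use a Paley–Wiener/Hardy-space argument to force the extension to be zero.

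Concretely, suppose $g\in L^2(-l,l)$ satisfies $g=\i H_t g$, and extend $g$ by zero to all of $\mathbb{R}$. Because $g$ vanishes outside $(-l,l)$, the principal-value integral defining $H_tg(x)$ for $x\in(-l,l)$ agrees with the principal-value integral defining $Hg(x)$ in \eqref{classical_Hilbert}. Hence $\i Hg(x)=g(x)$ for almost every $x\in(-l,l)$, while $g(x)=0$ for $x\notin(-l,l)$.

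The second step exploits the fact that $H$ is a unitary involution-like operator on $L^2(\mathbb{R})$: $\|Hf\|_{L^2(\mathbb{R})}=\|f\|_{L^2(\mathbb{R})}$. Applying this to $f=g$ gives
\begin{equation*}
\|\i Hg\|_{L^2(\mathbb{R})}^2=\|g\|_{L^2(\mathbb{R})}^2=\|g\|_{L^2(-l,l)}^2.
\end{equation*}
On the other hand, splitting the norm and using $\i Hg=g$ on $(-l,l)$,
\begin{equation*}
\|\i Hg\|_{L^2(\mathbb{R})}^2=\|g\|_{L^2(-l,l)}^2+\|\i Hg\|_{L^2(\mathbb{R}\setminus(-l,l))}^2.
\end{equation*}
Comparing these two identities forces $\i Hg=0$ outside $(-l,l)$, so $\i Hg=g$ on all of $\mathbb{R}$, i.e., $Hg=-\i g$.

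For the last step, one uses that $Hg=-\i g$ means $g$ lies in the Hardy eigenspace of $H$: the Fourier symbol $-\i\,\mathrm{sgn}(\xi)$ gives $\hat g(\xi)=0$ for $\xi<0$. But $g$ is compactly supported in $[-l,l]$, so by Paley–Wiener $\hat g$ extends to an entire function of exponential type. An entire function vanishing on a half-line is identically zero, so $\hat g\equiv 0$ and hence $g=0$.

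The main obstacle, conceptually, is the middle step: translating the local identity $g=\i H_t g$ on $(-l,l)$ into a global identity $g=\i Hg$ on $\mathbb{R}$. It is tempting to try to prove $\ker[I-\i H_t]=\{0\}$ by a direct spectral argument on $(-l,l)$, which is delicate because $\pm 1$ are spectral values of $\i H_t$; the unitarity trick sidesteps this entirely by moving the problem to the whole line where $\pm\i$ are eigenvalues of $H$ but the eigenspaces (Hardy spaces) are incompatible with compact support. Everything else is bookkeeping.
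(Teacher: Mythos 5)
Your proof is correct, and its main engine --- extending the kernel element by zero, observing that $H_tg=Hg$ on $(-l,l)$, and using the isometry of $H$ on $L^2(\BR)$ to force $Hg=0$ outside $(-l,l)$ --- is exactly the argument in the paper. Where you diverge is the endgame. The paper stays elementary: from $Hg(x)=0$ for $|x|>l$ it expands $\frac{1}{x-s}$ as a geometric series in $s/x$ to conclude that every moment $\int_{-l}^{l}g(s)s^j\,ds$ vanishes, and then invokes the density of polynomials in $L^2(-l,l)$. You instead upgrade $Hg=0$ off $(-l,l)$ to the global eigenvalue equation $Hg=-\i g$, read off from the Fourier multiplier $-\i\,\mathrm{sgn}(\xi)$ that $\hat g$ vanishes on a half-line, and kill $g$ via Paley--Wiener (compact support makes $\hat g$ entire, and an entire function vanishing on a half-line is identically zero). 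Both finishes are standard and rigorous; yours places the result in its natural conceptual home (the Hardy-space eigenspaces of $H$ are incompatible with compact support), while the paper's avoids any appeal to the Fourier symbol of $H$ or to Paley--Wiener, using only the geometric series and polynomial density --- a genuinely lighter toolkit, at the cost of obscuring the structural reason the kernel is trivial.
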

\begin{proof}
Let $f\in L^2(-l,l)$ be extended by zero to the entire real line.
If $f \in Ker [I-\i H_t]$, then 
$|f| = |H_tf|=|Hf|$ a.e. in $(-l,l)$, and 
\begin{align*}
 \norm{f}_{L^2(-l,l)}^2 &= \norm{f}_{L^2(\BR)}^2 = \norm{Hf}_{L^2(\BR)}^2 
 = \int_{-l}^{l} |H_tf|^2 dx + \int_{|x| >l} |Hf(x)|^2 dx \\
 &= \int_{-l}^{l} |f|^2 dx + \int_{|x| >l} |Hf(x)|^2 dx.
\end{align*}The latter identity shows that $|Hf(x)|=0$, for $|x| >l$. Thus, for $|x|>l$,
\begin{align*}
 0 &=\int_{-l}^{l}\frac{f(s)}{x-s}ds = \frac{1}{x} \int_{-l}^{l} f(s) \sum_{j=0}^{\INF} \left(\frac{s}{x} \right)^jds =   \sum_{j=0}^{\INF} \frac{1}{x^{j+1}} \int_{-l}^{l} f(s) s^jds,
\end{align*}yielding $f$ orthogonal on any polynomial. By density of polynomials in $L^2(-l,l)$, $f=0$.
\end{proof}

Recall the boundary value problem \eqref{TransportScatEq1}:
\begin{equation}\label{TransportScatEq2}
\begin{aligned} 
&\btheta\cdot\nabla u(z,\btheta) +a(z) u(z,\btheta) - \int_{\sph} k(z,\btheta \cdot \btheta')u(z,\btheta') d\btheta' = f(z), \quad (z,\btheta)\in \OM\times\sph,\\
&u\lvert_{\Gamma_-}=0,
\end{aligned}
\end{equation}
for an isotropic source $f$ and attenuation coefficient $a$, and  with a scattering kernel $k$ of the type \eqref{particular_kM},
\begin{align}\label{k}  
k(z,\cos \tta)  = k_0(z) +2\sum_{n=1}^{M} k_{-n}(z) \cos (n \tta),
\end{align}
for some fixed integer $M \geq1$.

We assume that $a, k_0,k_{-1},...,k_{-M}\in C^{2}(\ol\OM)$ are such that the forward problem \eqref{TransportScatEq2} has a unique solution $u\in L^p(\Omega\times\sph)$ for any $f\in L^p(\OM)$, see Theorem \ref{analytic_Fredholm}. We also assume an \emph{unknown} source of a priori regularity $f \in W^{2,p}(\ol \OM)$, $p > 4$. According to Theorem \ref{u_reg_Wp} part (ii), $u\in C^{1,\mu}(\OM\times\sph)$ with $\mu>1/2$. In agreement with the physics model, the functions $a,f,k$ are further assumed real valued, so that the solution $u$ is also real valued.  Note that, since $k(z, \cos \tta)$ in \eqref{k} is both real valued and even in $\tta$, the coefficient $k_{-n}$ in \eqref{k} is the  $(-n)^{th}$ Fourier coefficient of $k(z,\cos(\cdot))$. Moreover $k_{-n}$ is real valued, and $k_n(z)=k_{-n}(z) =\frac{1}{2\pi} \int_{-\pi}^{\pi} k(z,\cos \theta ) e^{i n\theta}d\theta$.

Let $u(z,\btheta) = \sum_{-\infty}^{\infty} u_{n}(z) e^{in\tta}$ be the formal Fourier series representation of the solution of \eqref{TransportScatEq2} in the angular variable $\btheta=(\cos\tta,\sin\tta)$. Since $u$ is real valued, $u_{-n}=\ol{u_n}$ and the angular dependence is completely determined by the sequence of its nonpositive Fourier modes 
\begin{align}\label{boldu}
\OM \ni z\mapsto  \bu(z)&: = \langle u_{0}(z), u_{-1}(z),u_{-2}(z),... \rangle.
\end{align}

Consider the decomposition of the advection operator $\btheta \cdot\nabla=e^{-\i \tta}\dba + e^{\i \tta}\del$, where $\dba = (\del_x+\i \del_y)/2$ and $\del =(\del_x-\i \del_y)/2$ are  derivatives in the spatial domain. By identifying the Fourier coefficients of the same order, the equation \eqref{TransportScatEq2} reduces to the system:
\begin{align}\label{source_syseq}
\overline{\del} u_{1}(z)+\del u_{-1}(z) + a(z)u_{0}(z)&=k_{0}(z)u_{0}(z)+f(z), \\ \label{finsys}
\dba u_{-n}(z) +\del u_{-n-2}(z) + a(z)u_{-n-1}(z) &= k_{-n-1}(z)u_{-n-1}(z),\quad 0 \leq n\leq M-1, \\ \label{infinite_sys}
\dba u_{-n}(z) +\del u_{-n-2}(z) + a(z)u_{-n-1}(z) &=0,\qquad n\geq M.
\end{align}

Without loss of generality we consider cartesian coordinates such that $\Lambda$ lies in the upper  half plane with endpoints on the real axis, and let $L=(-l,l)$ be the segment joining the endpoints of the arc. Let  $\OM^+=\{z\in\OM:\; \im{z}>0\}$ denote the convex hull of $\Lambda$,  and note that $\partial\OM^+=\Lambda\cup L$.

To simplify the statement of the next result, for each $n\geq 0$, let us introduce the functions $F_{-n}(z)$ defined for $z\neq\pm l$ in $\Lambda\cup L$ by
\begin{align}\label{Fanalytic}
F_{-n}(z):= \frac{1}{\i \pi }\int_\Lambda\frac{u_{-n}(\zeta)}{\zeta-z}d\zeta 
+\frac{1}{\i \pi }\int_\Lambda\left\{\frac{d\zeta}{\zeta-z}-\frac{d\ol\zeta}{\ol\zeta-\ol z}\right\}\sum_{j=1}^\infty u_{-n-2j}(\zeta)\left(\frac{\ol\zeta-\ol z}{\zeta-z}\right)^j.
\end{align}For $z\in\Lambda$,  the first integral is in the sense of principal value. Note that $F_{-n}$ is directly determined by the data $u_{-n}\lvert_\Lambda$,  $n\geq 0$. 


The proof of the following result is constructive and provides the basis of the reconstruction method implemented in Section \ref{Sec:numerics}. 
\begin{theorem}\label{main}
Let $\OM\subset\mathbb{R}^2$ be a strictly convex bounded domain, $\Lambda$ be an arc of its boundary,  and  $\Omega^+$ be the convex hull of $\Lambda$.  Consider the boundary value problem \eqref{TransportScatEq2} for some known real valued $a, k_0,k_{-1},...,k_{-M}\in C^{2}(\ol\OM)$ such that \eqref{TransportScatEq2} is well-posed. If the unknown source $f$ is real valued and $W^{2,p}(\ol \OM)$-regular, with $p > 4$, then $u\lvert_{\Lambda_+}$ uniquely determines $f$ in $\OM^+$. 
\end{theorem}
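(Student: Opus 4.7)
My plan follows a four-stage architecture: (i) extract the boundary Fourier modes from the measured trace; (ii) identify a deep ``tail'' of the Fourier system that fits the $\mathcal{L}^2$-analytic framework; (iii) use the Bukhgeim--Hilbert constraint on $\partial\OM^+=\Lambda\cup L$ together with Proposition \ref{prop_finiteHilbert} to transfer the trace from $\Lambda$ to the chord $L$; (iv) propagate inward via the Bukhgeim--Cauchy formula, and then iteratively recover the remaining modes and finally $f$.

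\textbf{Stages (i)--(ii).} Because $u\lvert_{\Gamma_-}=0$ and $\Lambda\subset\del\OM$, the datum $u\lvert_{\Lambda_+}$ together with the vanishing incoming trace determine $u(\zeta,\btheta)$ for almost every $(\zeta,\btheta)\in\Lambda\times\sph$, yielding every Fourier mode $u_{-n}\lvert_\Lambda$, $n\geq 0$. The hypothesis $f\in W^{2,p}(\ol\OM)$, $p>4$, via Theorem \ref{u_reg_Wp}(ii) and Sobolev embedding, places $u\in C^{1,\mu}(\ol\OM\times\sph)$ with $\mu>1/2$, which supplies the summability and Hölder regularity of the Fourier modes needed by Theorem \ref{BukhgeimCauchyThm}. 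Since the scattering kernel \eqref{k} has finite angular Fourier content $M$, the scattering operator acts on mode $m$ only for $|m|\leq M$; equation \eqref{infinite_sys} then shows that the shifted sequence $\bw := \langle u_{-M}, u_{-M-1},\ldots\rangle$ solves the source-free Bukhgeim system \eqref{beltrami}. Its image $\bv$ under the $h$-convolution \eqref{conv_uandv} (with $h$ known since $a$ is known) is therefore $\mathcal{L}^2$-analytic on $\OM$, hence on $\OM^+$, and $\bv\lvert_\Lambda$ is explicitly computable from $\bw\lvert_\Lambda$.

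\textbf{Stage (iii).} By Theorem \ref{BukhgeimCauchyThm} applied on $\partial\OM^+$, the trace $\bv\lvert_{\partial\OM^+}$ satisfies $(I+\i\HT)\bv=\mathbf 0$. Split each integral in \eqref{hilbertT} into a $\Lambda$-piece (computable from the known trace) and an $L$-piece. For $\xi,\zeta\in L\subset\mathbb R$ one has $\zeta=\ol\zeta$, $\xi=\ol\xi$ and $d\zeta=d\ol\zeta$, so the doubled-bracket factor in \eqref{hilbertT} vanishes, and only the Cauchy term on $L$ survives, reducing in each component to $-H_t v_n$. The $n$-th component of the constraint then reads
\begin{equation*}
(I-\i H_t)\,v_n\lvert_L \;=\; -\i\,(\HT_\Lambda\bv)_n, \qquad n\leq 0,
\end{equation*}
with right-hand side known. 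By Proposition \ref{prop_finiteHilbert}, $I-\i H_t$ is injective on $L^2(L)$, so each component $v_n\lvert_L$ is uniquely determined; inverting the $h$-convolution recovers $\bw\lvert_L$, i.e.\ $u_{-n}\lvert_L$ for every $n\geq M$.

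\textbf{Stage (iv).} With $\bw\lvert_{\partial\OM^+}$ now available, the Bukhgeim--Cauchy formula \eqref{BukhgeimCauchyFormula} reconstructs $\bv$, hence $\bw$, throughout $\OM^+$, giving $u_{-n}\lvert_{\OM^+}$ for all $n\geq M$. For $n=M-1$ down to $n=0$, equation \eqref{finsys} then becomes a scalar inhomogeneous $\dba$-equation for $u_{-n}$ whose right-hand side is already reconstructed in $\OM^+$. Writing $u_{-n}=W+H$ with $W$ a Cauchy--Pompeiu particular solution and $H$ holomorphic in $\OM^+$, the trace $H\lvert_\Lambda$ is known; a Plemelj--Sokhotski computation on $\partial\OM^+$ yields an equation of the same type $(I-\i H_t)H\lvert_L=$(known), again uniquely invertible by Proposition \ref{prop_finiteHilbert}. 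The classical Cauchy integral then recovers $H$, hence $u_{-n}$, in $\OM^+$. Once $u_0$ and $u_{-1}$ are in hand, equation \eqref{source_syseq}, together with $u_1=\ol{u_{-1}}$ from the reality of $u$, gives
\begin{equation*}
f \;=\; 2\,\re\bigl(\del u_{-1}\bigr) \;+\; (a-k_0)\,u_0 \qquad \text{in } \OM^+.
\end{equation*}
The principal technical obstacle I anticipate is Stage (iii): keeping the bookkeeping tight enough that the cancellation on $L$, the componentwise invertibility of $I-\i H_t$, and the regularity class $l^{1,1}_\INF\cap C^\mu$ all cooperate, so that the reconstructed $\bv\lvert_L$ is admissible data for the Bukhgeim--Cauchy extension used in Stage (iv).
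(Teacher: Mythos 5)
Your proposal is correct and follows essentially the same route as the paper: the shift by $M$ to obtain the source-free Bukhgeim system, the $e^{\mp h}$ convolution to pass to $\mathcal{L}^2$-analytic maps, the $(I+\i\HT)$ constraint on $\Lambda\cup L$ reducing on the chord to $(I-\i H_t)v_{-n}=F_{-n}$ solved via Proposition \ref{prop_finiteHilbert}, the Bukhgeim--Cauchy extension into $\OM^+$, and the downward iteration of $\dba$-Cauchy problems (your particular-plus-holomorphic splitting is just the paper's Cauchy--Pompeiu compatibility condition in slightly different clothing), ending with $f=2\re(\del u_{-1})+(a-k_0)u_0$.
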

\begin{proof}

Let $u$ be the solution of the boundary value problem \eqref{TransportScatEq2} and  $\bu= \langle u_{0}, u_{-1}, u_{-2}, ... \rangle $ 
be the sequence valued map of its non-positive Fourier modes, 
Since $f \in W^{2,p}(\ol \OM)$, $p > 4$, then by Proposition \ref{u_reg_Wp} (ii), $u \in W^{2,p}(\Omega\times\sph)$. By the Sobolev embedding, $u\in C^{1,\mu}(\ol \OM\times\sph)$ with 
$\mu =1-\frac{2}{p}>\frac{1}{2}$, and thus, by \cite[Proposition 4.1 (i)]{sadiqTamasan01},  $\bu \in l^{1,1}_{\INF}(\del \OM^+)\cap C^\mu(\del \OM^+;l_1)$. 

We note that the shifted sequence valued map $\mathcal{L}^M\bu$ solves 
\begin{align}\label{Lmu_beltrami}
\dba \mathcal{L}^{M} \bu(z) +\mathcal{L}^2 \del \mathcal{L}^{M} \bu(z)+ a(z)\mathcal{L}^{M+1}\bu(z) = \bzero,\quad z\in \OM,
\end{align}and then
the associated sequence valued map $\mathcal{L}^M\bv =(v_{-M},v_{-M-1},v_{-M-2}...)$ defined by the convolutions \eqref{conv_uandv}  solves 
\begin{align}\label{vn_infinite_sysM}
 \dba v_{-n}(z) +\del v_{-n-2}(z) &=0,\quad z\in \OM, \; n\geq M.
\end{align} 
In particular, $\mathcal{L}^M\bv$ is $\mathcal{L}^2$-analytic.

By \eqref{data}, the data $u|_{\Lambda_+} = g$ on $\Lambda_+$ determines $\mathcal{L}^M\bu$ on $\Lambda$. By the convolution formula \eqref{conv_uandv} for $n\geq M$, $\mathcal{L}^M\bu \lvert_{\Lambda}$  determines the traces $\mathcal{L}^M\bv \in l^{1,1}_{\INF}(\Lambda)\cap C^\mu(\Lambda;l_1)$ on $\Lambda$.

Since $\mathcal{L}^M\bv  \in l^{1,1}_{\INF}(\del \OM^+)\cap C^\mu(\del \OM^+;l_1)$ is the boundary value of an $\mathcal{L}^2$-analytic function in $\OM^+$, then the necessity part of Theorem \ref{BukhgeimCauchyThm} yields 
\begin{align}\label{Cond1}
[I+\i\HT] \mathcal{L}^M\bv={\bf {0}},
\end{align}where $\HT$ is the Bukhgeim-Hilbert transform in \eqref{hilbertT}.

 We consider \eqref{Cond1} on $L=(-l,l)$, where for each $x\in (-l,l)$ and $n\geq M$, the $n$-th component yields
\begin{align}\nonumber
v_{-n}(x)  - \frac{\i}{\pi} \int_{-l}^{l}  \frac{v_{-n}(s)}{x-s}  ds = -\frac{\i}{\pi} \int_{\Lambda} \frac{v_{-n}(\zeta)}{\zeta-x} d\zeta 
 &-\frac{\i}{\pi} \int_{\Lambda } \left \{ \frac{d\zeta}{\zeta-x}-\frac{d \ol{\zeta}}{\ol{\zeta}-x} \right \}
  \sum_{j=1}^{\infty}  v_{-n-2j}(\zeta)
\left( \frac{\ol{\zeta}-x}{\zeta-x} \right) ^{j} \\
&-
\frac{\i}{\pi} 
\int_{-l}^{l} \left \{ \frac{d\zeta}{\zeta-x}-\frac{d \ol{\zeta}}{\ol{\zeta}-x} \right \}
  \sum_{j=1}^{\infty}  v_{-n-2j}(\zeta)
\left( \frac{\ol{\zeta}-x}{\zeta-x} \right) ^{j}
.\label{knowntobezero}
\end{align}

Since the last integral in \eqref{knowntobezero} ranges over the reals, it vanishes. The remaining two integrals in the right hand side give $F_{-n}(x)$  in \eqref{Fanalytic}, and \eqref{knowntobezero} becomes
\begin{align}\label{Pminus_un}
[I - \i H_t](v_{-n})(x)=& F_{-n}(x), \quad x \in L, \quad n \geq M,
\end{align}where $H_t$ is the finite  Hilbert transform in \eqref{finite_Hilbert}. For each $n\geq M$, by Proposition \ref{prop_finiteHilbert},  $v_{-n} \big \lvert_{L}$ is determined as the unique solution in $L^2(-l,l)$ of \eqref{Pminus_un}. 

Note that the equation \eqref{Pminus_un} may not have any solution for an arbitrary right hand side in $L^2(-l,l)$. However, in our inverse problem, the function $F_{-n}$ already belongs to the range of $I -\i H_t$, so that the solution exists. Moreover, since the range is open, \eqref{Pminus_un} is uniquely solvable in a sufficiently small  $L^2$-neighborhood of $F_{-n}$.

With $v_{-n}$ now known on $\Lambda\cup L$ for $n \geq M$, we apply the Bukhgeim-Cauchy integral formula \eqref{BukhgeimCauchyFormula} to find $v_{-n}$ for $n \geq M$  in $\OM^+$. 

Using again the convolution formula \eqref{conv_uandv}, now in  $\OM^+$, we determine $u_{-n}$ for $n \geq M$  inside $\OM^+$. In particular we recovered $u_{-M-1}, u_{-M}$.

Recall that $u_0, u_{-1}, u_{-2}, \cdots, u_{-M}, u_{-M-1}$ satisfy
\begin{subequations} \label{Dirichlet_inhomDbar}
\begin{align} \label{dbaU_eq}
\dba u_{-M+j} &= -\del u_{-M+j-2} - \left[ (a- k_{-M+j-1}) u_{-M+j-1} \right], \quad 1\leq j \leq M,\\ 
\label{u_Lambda}  u_{-M+j} \lvert_{\Lambda} &= g_{-M+j}.
\end{align}
\end{subequations} 

We solve \eqref{Dirichlet_inhomDbar} iteratively for $j=1,2,...,M$, as a Cauchy problem for the $\dba$-equation with partial boundary data on $\Lambda$, 
\begin{subequations} \label{Cauchyproblem}
\begin{align} \label{dba_Cauchyproblem}
\dba w &= \Psi, \quad \text{in} \; \OM^+,\\ \label{u_Lambda_data}
     w &= g \quad \text{on} \; \Lambda,
\end{align}
\end{subequations} 
via the Cauchy-Pompeiu formula \cite{vekua_book62}:
\begin{align}\label{CauchyPompeiuformula}
w(z)=\frac{1}{2\pi \i}\int_{\del \OM^+} \frac{w(\zeta)}{\zeta-z}d \zeta -  
\frac{1}{\pi} \iint_{\OM^+} \frac{\Psi(\zeta)}{\zeta-z} d \xi d \eta, 
\quad \zeta = \xi + \i \eta, \quad z \in \OM^+.
\end{align}

For $\Psi\in L^p(\Omega)   $, $p>2$, and $g\in L^p(\Lambda)$, any $w$ defined by \eqref{CauchyPompeiuformula} solves \eqref{dba_Cauchyproblem}. However, for the boundary condition  \eqref{u_Lambda_data} to be satisfied 
the following compatibility condition needs to hold: By taking the limit $\ds\OM^+ \ni z \rightarrow z_0 \in \del\OM^+$ in \eqref{CauchyPompeiuformula} and using the Sokhotski-Plemelj formula \cite{muskhellishvili} in the first integral, and the continuous dependence on $z$ of the area integral \cite[Theorem 1.19]{vekua_book62}, the trace $w|_{\del\OM^+}$ and $\Psi$ must satisfy
\begin{align*}
w(z_0)=\frac{1}{2\pi \i}\int_{\del \OM^+} \frac{w(\zeta)}{\zeta-z_0}d \zeta + \frac{1}{2} w(z_0) -  \frac{1}{\pi} \iint_{\OM^+} \frac{\Psi(\xi,\eta)}{(\xi-z_0) +\i \eta} d \xi d \eta, \quad z_0 \in \del\OM^+.
\end{align*} 
In our inverse problem this compatibility condition is already satisfied for $z_0\in \Lambda$. We use this compatibility condition for $z_0\in L$, to recover the missing boundary data $w|_L$.
More precisely, by Proposition  \ref{prop_finiteHilbert}, $w|_L$ is the unique solution of
\begin{align}\label{Hilbert_Pompeiuformula}
[I - \i H_t]w(z_0) =\frac{1}{\pi \i}\int_{\Lambda} \frac{g(\zeta)}{\zeta-z_0}d \zeta -  \frac{2}{\pi} \iint_{\OM^+} \frac{\Psi(\xi,\eta)}{(\xi-z_0) +\i \eta} d \xi d \eta, \quad z_0 \in L.
\end{align}

If $g\in L^p(\Lambda)$, $p\geq2$, then  \eqref{Hilbert_Pompeiuformula} provides a unique solution $w|_L\in L^p(-l,l)$. Moreover, for $\Psi\in L^p(\OM)$, $p>2$, the solution $w$  of \eqref{dba_Cauchyproblem} is provided by \eqref{CauchyPompeiuformula} and lies in $W^{1,p}(\OM)$. In the iteration, the right hand side of \eqref{dbaU_eq} is again in $L^p(\OM)$, and the iteration can proceed.

We solve repeatedly \eqref{Dirichlet_inhomDbar} for $j=1,....M$ to  recover $u_{-1}$ and $u_0$ in $\OM^+$. A priori, from the regularity of the solution of the forward problem, we known that $u_{-1}, u_{0}\in C^{1,\mu}(\ol\OM)$ so that the source $f$ is recovered pointwise by 
\begin{align} \label{fsource_Scatpoly}
 f|_{\OM^+}(z) = 2 \re \left(\del u_{-1}(z)\right) + \left( a(z)-k_0(z) \right) u_0(z),\quad z\in\OM^+
\end{align}as a $C^\mu(\ol{\OM^+})$-map.
\end{proof}


We summarize below in a stepwise fashion the reconstruction of $f$ in the convex hull ${\OM^+}$ of the boundary arc $\Lambda$. Recall that $L$ is the segment joining the endpoints of $\Lambda$.


\subsection*{Reconstruction procedure}
Consider the data $\bu \lvert_{\Lambda}$.
\begin{enumerate}
\item Using formula \eqref{conv_uandv}, and data $\mathcal{L}^M\bu$ on $\Lambda$, determine the traces $\mathcal{L}^M\bv \lvert_{\Lambda}$ on $\Lambda$.
 \item Recover the traces $\mathcal{L}^M\bv\lvert_{L}$ pointwise on $L$ as follows:
      \begin{enumerate}
               \item Using $\mathcal{L}^M\bv \lvert_{\Lambda}$, compute by formula \eqref{Fanalytic},  the   
                          function $F_{-n}$, for each $n\geq M$.
       \item Recover for each $n\geq M$, the trace $v_{-n} \big \lvert_{L}$ by solving \eqref{Pminus_un}.
      \end{enumerate}
 \item By Bukhgeim-Cauchy formula \eqref{BukhgeimCauchyFormula}, extend $v_{-n}$ for $n \geq M$ from the boundary $\Lambda\cup L$ to $\OM^+$.
\item Using again formula \eqref{conv_uandv}, now in  $\OM^+$, recover $u_{-n}$ for $n \geq M$  inside $\OM^+$. 
 \item Using $u_{-M-1}, u_{-M}$, recover the modes $u_{-M+1}, u_{-M+2}, \cdots, u_{-1}, u_0 $ recuresively as follows:
 \begin{enumerate}
		\item Using data $u_{-M+1} \lvert_{\Lambda}$, recover the trace $u_{-M+1} \big \lvert_{L}$ by solving \eqref{Hilbert_Pompeiuformula}.
		\item Using $u_{-M+1} \lvert_{\Lambda \cup L}$, recover $u_{-M+1} $ inside $\OM^+$  by the  Cauchy-Pompeiu formula \eqref{CauchyPompeiuformula}. 
		\item Now iterate the steps (5 a) and (5 b) to find the modes $ u_{-M+2}, \cdots, u_{-2}, u_{-1}, u_0 $ in $\OM^+$. 
\end{enumerate}
\item Recover $f|_{\OM^+}$ by formula \eqref{fsource_Scatpoly}.
\end{enumerate}


\section{Numerical results}\label{Sec:numerics}

To illustrate the numerical feasibility of the proposed method and its extensibility to general settings, in this section we present the results of two numerical experiments. The rigorous analysis on the numerical methods employed below requires further study and is left for a separate discussion.

The domain $\OM$ is the unit disk, the measurement boundary $\Lambda$ is the upper semicircle, and $\OM^+$ is the upper semidisk. The numerical experiments consider the boundary value problem  \eqref{TransportScatEq2}
with the attenuation 
\[
a(z) = \mu_{\text{s}}(z) + \mu_{\text{a}}(z),
\]
where $\mu_{\text{s}}$,  and $\mu_{\text{a}}$ are the scattering and absorption coefficients, respectively.

In the first numerical experiment $a \in C^2(\ol\Omega)$ and the scattering kernel is homogeneous with
\begin{equation}\label{quadratic_scat}
 k(\btheta\cdot\btheta') = \dfrac{ \mu_{\text{s}}}{2\pi}\left[ 1+2g(\btheta\cdot\btheta')+ 2g^2\cos(2\arccos (\btheta\cdot\btheta'))\right].
\end{equation}

In contrast, the second numerical experiment uses a discontinuous absorption coefficient $\mu_a$,  and a homogeneous scattering kernel $k$ of infinite Fourier content. More precisely, we work with 
the two dimensional Henyey-Greenstein (Poisson) kernel
\begin{equation}\label{poisson_scat}
k(\btheta\cdot\btheta')
= \mu_{\text{s}}\dfrac{1}{2\pi}\dfrac{1-g^2}{1-2g\btheta\cdot\btheta' + g^2}.
\end{equation}
Note that \eqref{quadratic_scat} comes from the quadratic truncation of \eqref{poisson_scat}.

Throughout this section, $\mu_{\text{s}}\equiv 5$ and $g=1/2$ are used,
and computations are processed in the standard double precision arithmetic.
The parameter $g=1/2$ yields an anisotropic scattering half way between the ballistic ($g=0$) regime and an isotropic scattering regime ($g=1$). 
The value of the parameter $\mu_{\text{s}}$ yields that, on average, a  particle scatters after running straight every $1/5$ of the unit pathlength. These parameter choices are meaningful in certain optical regimes, where the diffusion approximation would not hold.

Let $R=(-0.25,0.5) \times (-0.15,0.15)$ be a rectangular, and
\begin{align*}
B_1 &= \{ (x,y) \::\: (x-0.5)^2 + y^2 < 0.3^2 \},\\
B_2 &= \left\{ (x,y) \::\: (x+0.25)^2 + \left(y-\dfrac{\sqrt{3}}{4}\right) < 0.2^2 \right\}, \quad \text{and}\\
B_3 &= \{ (x,y) \::\: x^2 + (y+0.6)^2 < 0.3^2 \}
\end{align*}
be circular regions inside $\OM$ as illustrated in Figure \ref{fig:domain}.
\begin{figure}[t]
\centering
\begin{tikzpicture}[scale=2]
\draw [line width=2] (0,0) circle (1); 
  \node at (0.9,0.9) {$\Omega$};

\draw [color=gray,fill=gray,line width=0] (-0.25,0.433013) circle (0.2); 
\draw [color=gray,fill=gray,line width=0] (0,-0.6) circle (0.3); 
  \node at (-0.4,-0.4) {$B_3$};

\draw [color=gray,fill=gray,line width=0] (-0.25,-0.15) rectangle (0.5,0.15); 
  \node at (-0.4,-0.1) {$R$};

\draw [dotted,line width=2] (0.5,0) circle (0.3); 
  \node at (0.5,0.45) {$B_1$};

\draw [dotted,line width=2] (-0.25,0.433013) circle (0.2); 
    \node at (-0.25,0.8) {$B_2$};

\draw [->] (-1.2,0) -- (1.2,0);
\draw [->] (0,-1.2) -- (0,1.2);

\end{tikzpicture}

\caption{\label{fig:domain}Domain $\OM$ and inclusions; the dotted circles ($B_1$ and $B_2$)
indicates highly absorbing regions, while gray regions ($B_2$, $R$ and $B_3$)
are support of internal sources.}
\end{figure}
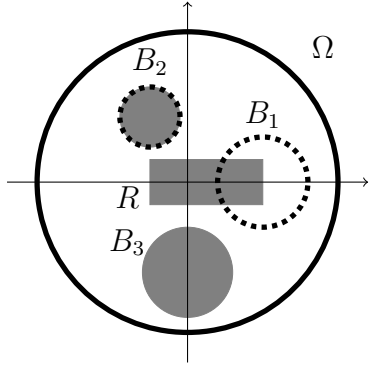
The source term
\[
f(z) = \begin{cases}
2, \quad & \text{in $R$};\\
1, \quad & \text{in $B_2\cup B_3$};\\
0, \quad & \text{otherwise},
\end{cases}
\]used to generate the boundary data on $\Lambda_+$ is to be reconstructed  in the upper semidisk $\OM^+$.

In the first example
the absorption coefficient is a $C^2$-smoothen version of the discontinuous case in \eqref{disc_a}. Namely, for $\epsilon = 0.05$,
\[
\mu_{\text{a}}(z) = \begin{cases}
2, \quad & \text{in $\{ z \::\: \dist(z,\partial B_1) \ge \epsilon \}\cap B_1$};\\
1, \quad & \text{in $\{ z \::\: \dist(z,\partial B_2) \ge \epsilon \}\cap B_2$};\\
0.1, \quad & \text{in $\{ z \::\: \dist(z,B_1) \ge \epsilon \}\cap \{ z \::\: \dist(z,B_2) \ge \epsilon \}$},
\end{cases}
\]
while in the $\epsilon$-neighborhoods of $\partial B_1$ and $\partial B_2$  we use some translations and scalings of the quintic polynomial $-(|z|-1)^3(6|z|^2+3|z|+1)$, $|z|\leq 1$, to define $\mu_{\text{a}}\in C^2(\ol\OM)$.

For our inverse problem,
the ``measured" data \eqref{data} is generated by the numerical computation
of the corresponding forward problem~\eqref{TransportScatEq1} in $\OM\times\sph$, where we retain the trace of the solution $u|_{\Lambda_+}$ and disregard the rest. Note the contribution to the data of radiation coming from the source supported in the lower half of the rectangle $R$ and from the ball $B_3$. 

The numerical solution is processed by the piecewise constant approximation method in \cite{fujiwara},
where the spatial domain $\OM$ and $\sph$ are divided into
$4,823,822$ triangles (the maximum diameter is approximately $0.0025$)
and $360$ velocities at equi-intervals respectively.
The boundary $\partial\OM$ is approximated by $5,234$ equi-length segments and,
thus, $2,617$ spatial measurement nodes are assigned on $\Lambda$.

The computed data is depicted in Figure~\ref{fig:measurement}.  In there, for  $\zeta\in\Lambda$  (indicated by $\times$), the red closed curves represent $\zeta + 2u(\zeta,\btheta)$.
The zero incoming flux boundary condition can be observed on the computed radiation, where the curves do not enter $\OM$.
\begin{figure}[h]
\begin{minipage}{.5\textwidth}
\centering
\includegraphics[bb=220 140 820 680, width=.8\textwidth]{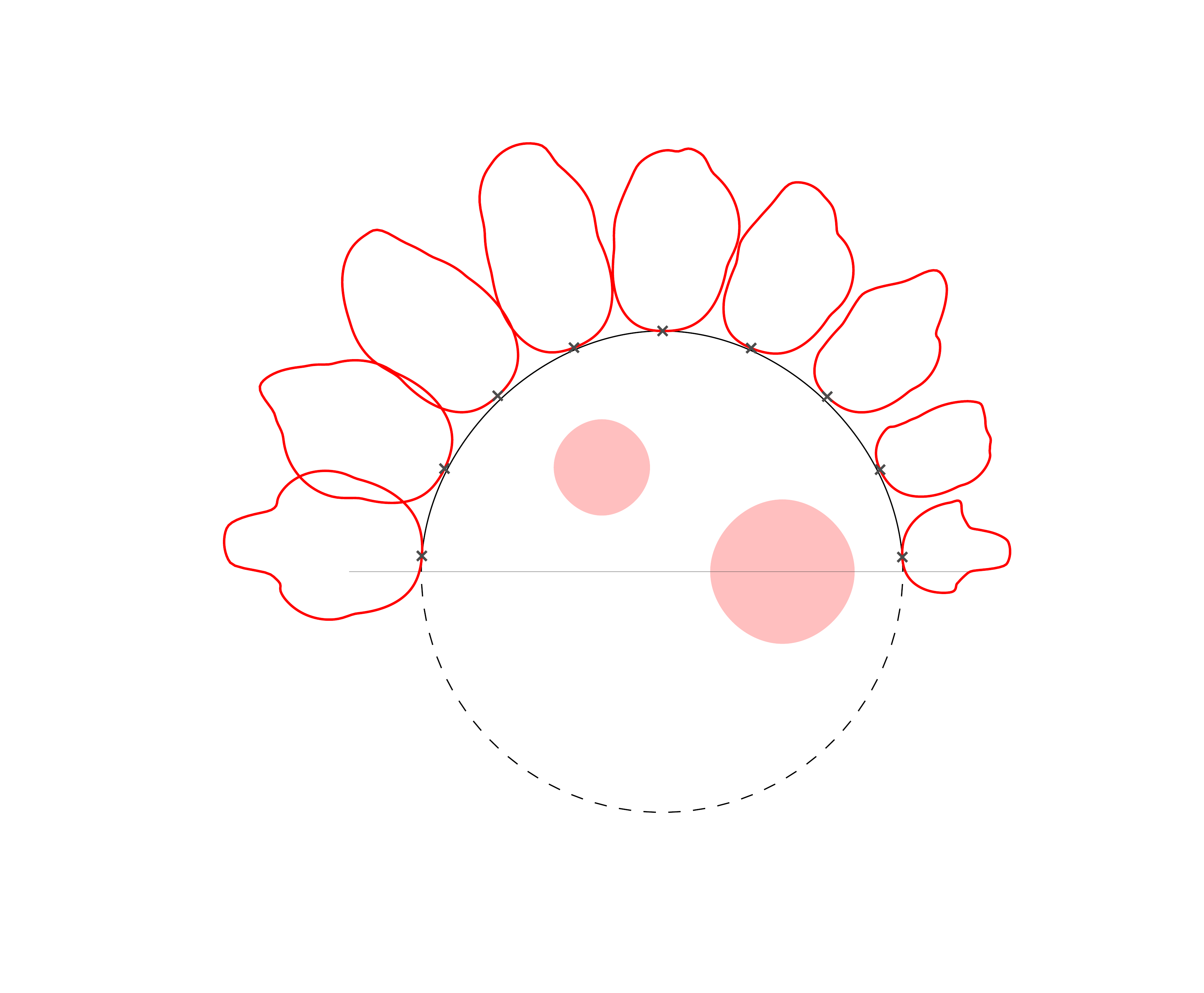}
\end{minipage}
\begin{minipage}{.3\textwidth}
\includegraphics[width=.7\textwidth]{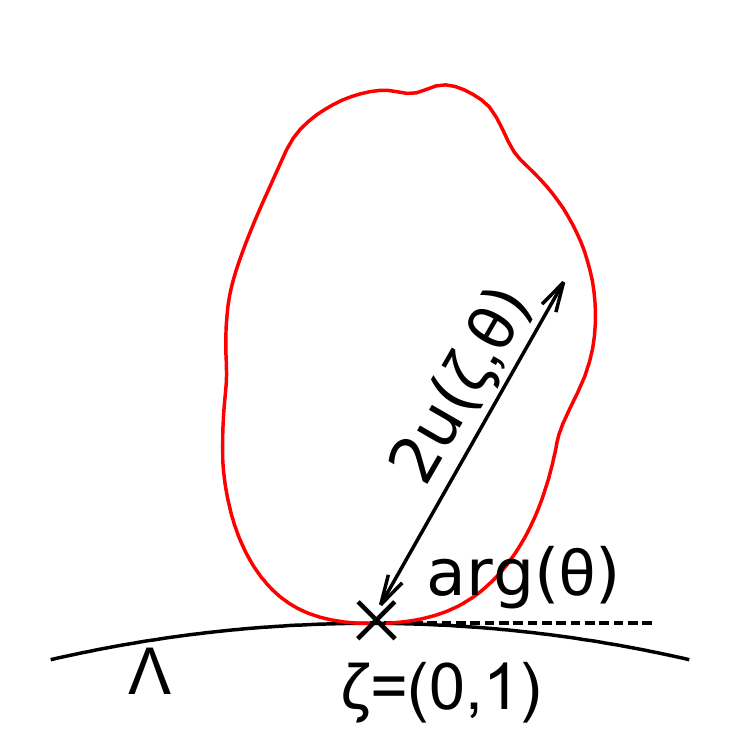}
\end{minipage}
\caption{\label{fig:measurement}Boundary measurement $u|_{\Lambda_+}$;
(left) the measurement arc $\Lambda$ lies in the upper half plane.
Each red closed curve stands for $\zeta+2u(\zeta,\btheta)$ for $\btheta \in \sph$
at $\zeta \in \Lambda$ indicated by the cross symbol ($\times$);
(right) magnification of $u(\zeta,\btheta)$ at $\zeta = (0,1)$.
}
\end{figure}

In our numerical reconstruction,
the domain of interest $\OM^+$ is partitioned into a triangular mesh without any prior information on $R$ and $B_i$, $i=1,2,3$.
The number of triangles ($8,631$) in this mesh is much less than that the number  of triangles ($4,823,822$) used in the  computation in the forward problem, thus avoiding an inverse crime.
The attenuation coefficient $a$ is assumed known in $\ol\OM$. The triangular mesh induces $157$ nodes on $\Lambda$ and $100$ nodes on $L$.

The Hilbert transform in \eqref{hDefn} is computed by a method proposed in \cite{fujiwaraSadiqTamasan20}.
All the integrations are approximated by the composite mid-point rule
with equi-spaced intervals. The integrating factors in \eqref{ehEq} are computed with $100$ subintervals and $360$ velocities,
and the integral equations~\eqref{Pminus_un} and \eqref{Hilbert_Pompeiuformula} are computed with
$1,666$ subintervals (so that $\Delta x = 2/1666\approx 0.0012$ on $L$ is about the same with the length $\pi/2617$ of the partition on the arc $\Lambda$).
Of particular interest, and key to our procedure, is the numerical computation of the  integral equations~\eqref{Pminus_un} and \eqref{Hilbert_Pompeiuformula},
which is done via the collocation method with the numerical integration rule
\[
[I - \i H_t]u(x_i)
\approx u(x_i) - \dfrac{\i}{\pi}\sum_{j\neq i}\dfrac{u(x_j)}{x_i - x_j}\Delta x.
\]

Except for the implicit regularization due to the discretization, no other regularization method is  explicitly employed in numerical implementation. The numerical reconstruction takes approximately $115$ seconds
by OpenMP parallel computation
on two Xeon E5-2650 v4 (2.20GHz, 12 cores) processors.

Figure ~\ref{fig:smooth} depicts numerical reconstruction results:
on the left is the profile of reconstructed $f(z)$,
and on the right is its section on the dotted line ($y = -\sqrt{3}x$).
In the figure the support of $f(z)$ is successfully reproduced. The reconstructed source $f$ away from the segment $L$ is
quantitatively in fine agreement with the exact one. However the accuracy of the reconstruction decreases at points on the support of $f$ which are close to the line segment $L$. 
One of the reasons is the ill-posedness in equation \eqref{Pminus_un}, which we currently mitigate solely via the discretization.  For a more accurate numerical reconstruction at such points, the choice of a better regularizer is needed. 

\begin{figure}[h]
\begin{minipage}{.65\textwidth}
\includegraphics[bb=75 50 355 185,width=\textwidth]{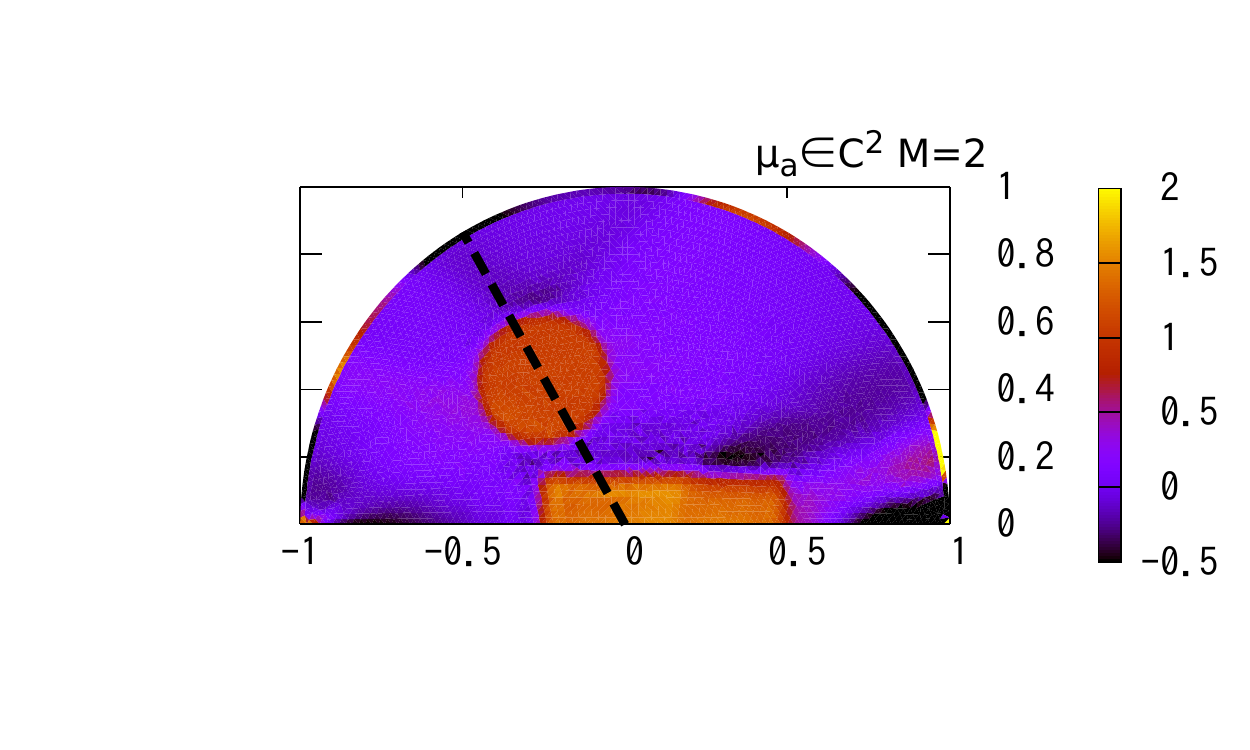}
\end{minipage}
\begin{minipage}{.3\textwidth}
\includegraphics[bb=55 0 190 150,width=\textwidth]{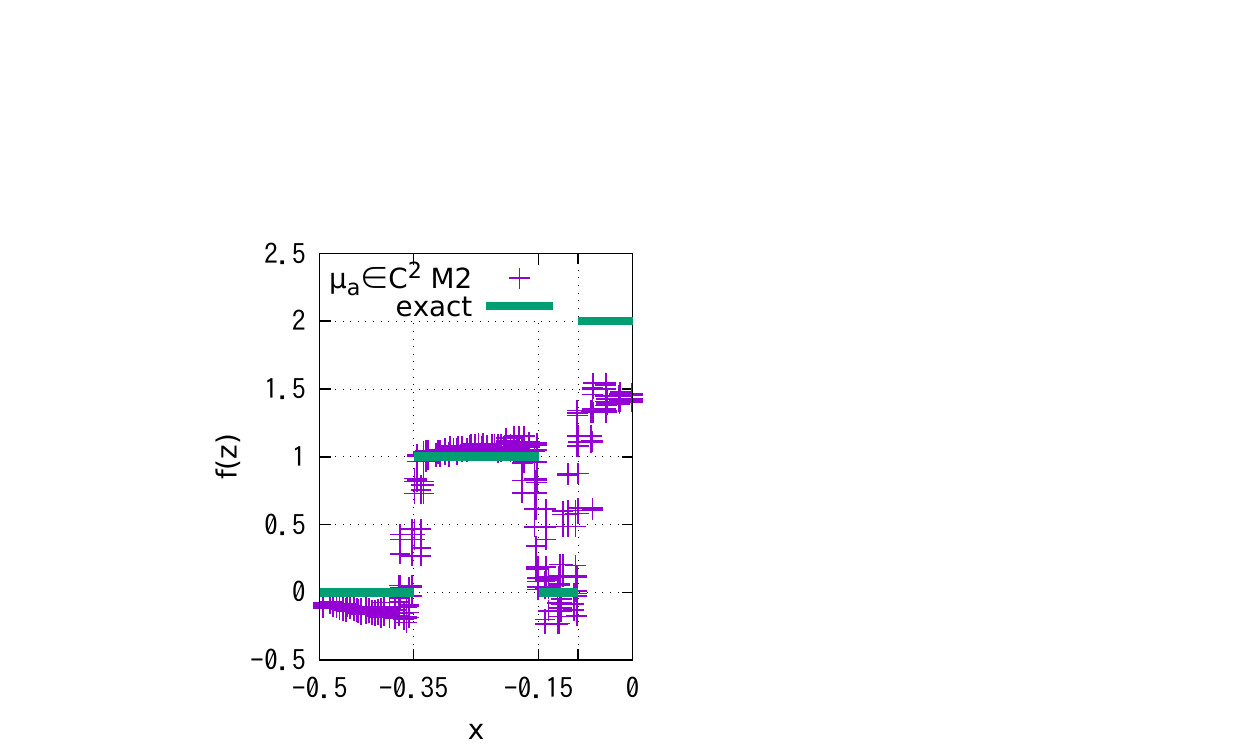}
\end{minipage}
\caption{\label{fig:smooth}Numerically reconstructed $f(z)$ for
polynomial type scattering kernel \eqref{quadratic_scat} and $\mu_{\text{a}}\in C^2(\OM)$.
(left) the profile of reconstructed $f$ in the domain of interest $\OM^+$;
(right) the section on the dotted line.}
\end{figure}

In the second example, the proposed algorithm is applied to 
the non-polynomial type scattering kernel \eqref{poisson_scat} and the discontinuous absorption coefficient
\begin{align}\label{disc_a}
\mu_{\text{a}}(z) = \begin{cases}
2, \quad & \text{in $B_1$};\\
1, \quad & \text{in $B_2$};\\
0.1, \quad & \text{otherwise}.
\end{cases}
\end{align}

\begin{figure}[h]
\begin{minipage}{.65\textwidth}
\includegraphics[bb=75 50 355 185,width=\textwidth]{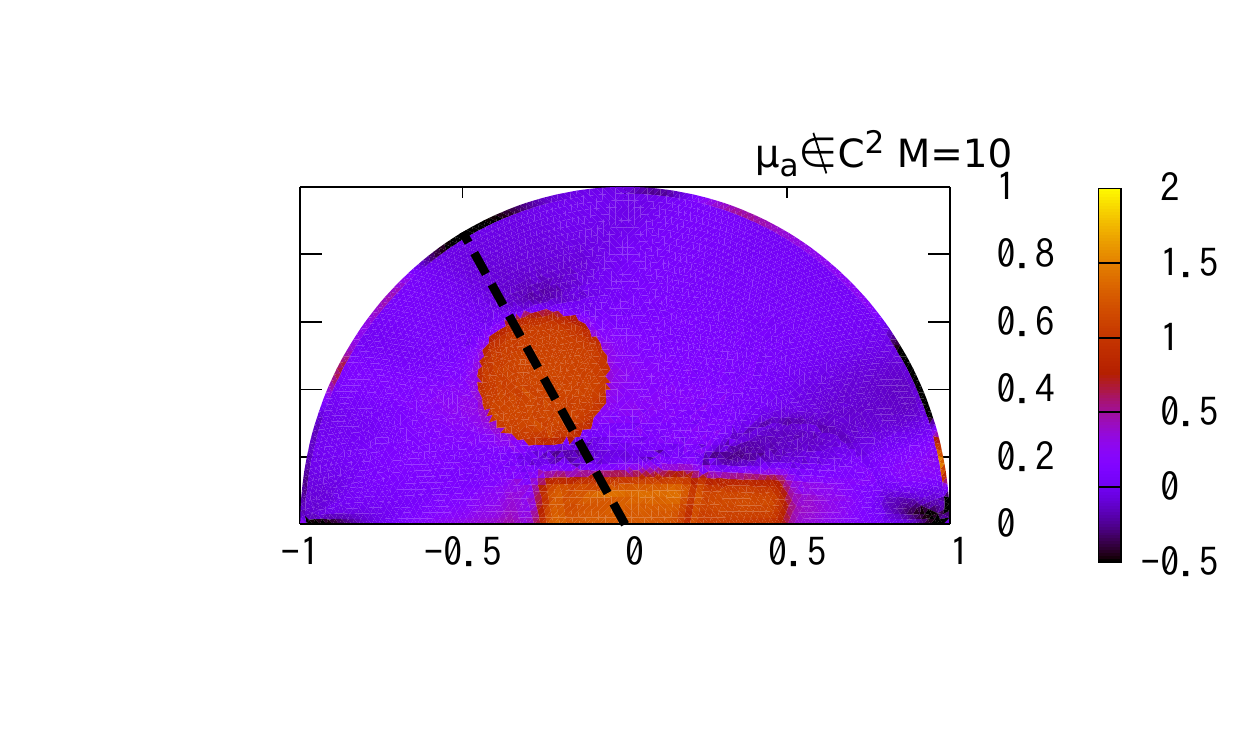}
\end{minipage}
\begin{minipage}{.3\textwidth}
\includegraphics[bb=55 0 190 150,width=\textwidth]{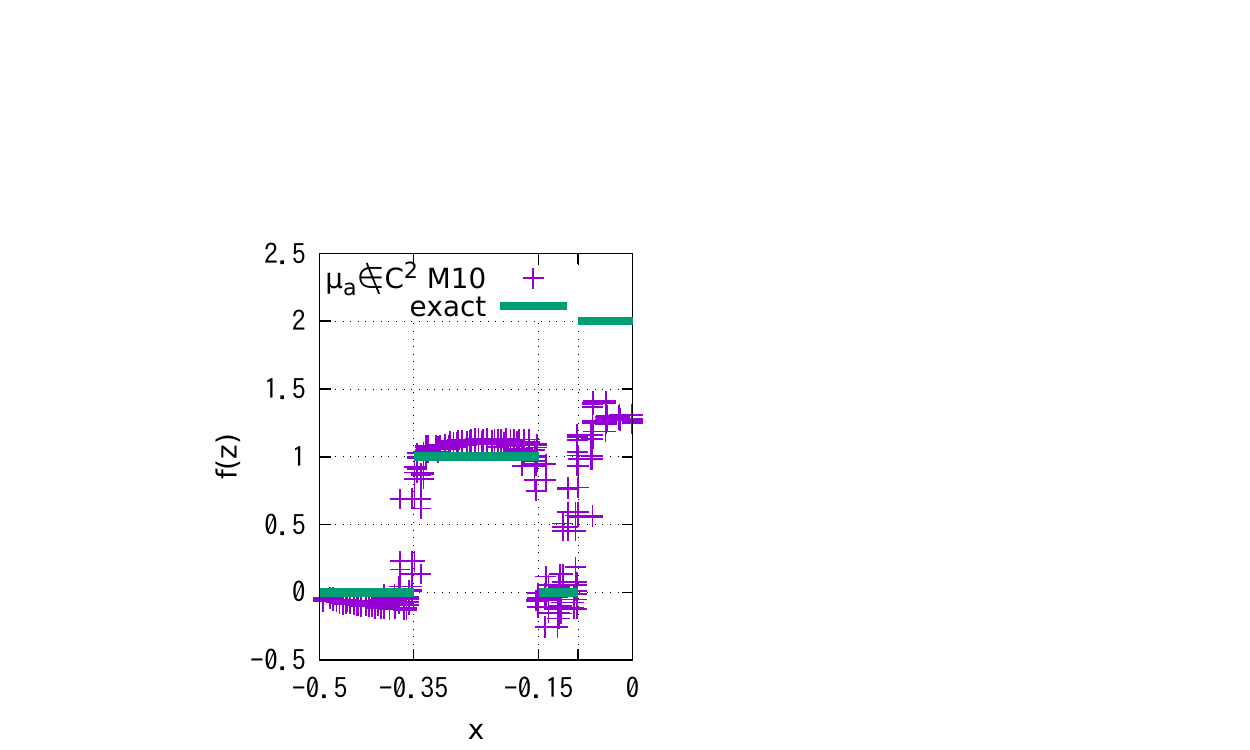}
\end{minipage}
\caption{\label{fig:discont}Numerically reconstructed $f(z)$ for the case with
Poisson kernel (non-polynomial type scattering kernel) and discontinuous $\mu_{\text{a}}$.}
\end{figure}

Figure ~\ref{fig:discont} shows numerical results,
where \eqref{Dirichlet_inhomDbar} is recursively solved 10 times ($M=10$). Even though the smoothness hypotheses in Theorem~\ref{main} are violated, and despite of a scattering kernel of infinite Fourier content, the numerical results are in agreement with the results in the first example (where all the hypotheses in Theorem~\ref{main} were satisfied). This illustrates the robustness of the proposed reconstruction method, and indicates that the reconstruction result in Theorem~\ref{main} may hold under more relaxed hypotheses.


\section*{Acknowledgment}
The work of H.~ Fujiwara was supported by JSPS KAKENHI Grant Numbers JP19H00641 and JP20H01821.
The work of K.~ Sadiq  was supported by the Austrian Science Fund (FWF), Project P31053-N32. 
The work of A.~Tamasan  was supported in part by the NSF grant DMS-1907097.


\end{document}